\newcommand{\C}{\mathbb{C}}
\newcommand{\ZZ}{\mathbb{Z}}
\newcommand{\QQ}{\mathbb{Q}}
\newcommand{\NN}{\mathbb{N}}
\newcommand{\PP}{\mathbb{P}}
\newcommand{\OO}{\mathcal O}
\newcommand{\MM}{\mathcal M}
\newcommand{\wt}{\widetilde}
\newcommand{\rom}{\romannumeral}
\newcommand{\alb}{\hbox{Alb}}
\newtheorem{theorem}{Theorem}[section]
\newtheorem{lemma}[theorem]{Lemma}
\newtheorem{corollary}[theorem]{Corollary}
\newtheorem{proposition}[theorem]{Proposition}
\newtheorem{conjecture}[theorem]{Conjecture}
\newtheorem{remark}[theorem]{Remark}
\newtheorem{definition}[theorem]{Definition}
\newtheorem{convention}{Conventions}
\newtheorem{notation}[theorem]{Notation}
\newtheorem{nonumbering}{Theorem}
\newtheorem{nonumberingt}{Acknowledgements}
\begin{document}
\author[Robert Laterveer]
{Robert Laterveer}

\address{Institut de Recherche Math\'ematique Avanc\'ee,
CNRS -- Universit\'e 
de Strasbourg,\
7 Rue Ren\'e Des\-car\-tes, 67084 Strasbourg CEDEX,
FRANCE.}
\email{robert.laterveer@math.unistra.fr}

\title{Bloch's conjecture for Enriques varieties}

\begin{abstract} Enriques varieties have been defined as higher--dimensional generalizations of Enriques surfaces. Bloch's conjecture implies that Enriques varieties should have trivial Chow group of zero--cycles. We prove this is the case for all known examples of irreducible Enriques varieties of index larger than $2$. The proof is based on results concerning the Chow motive of generalized Kummer varieties.
\end{abstract}

\keywords{Algebraic cycles, Chow groups, motives, finite--dimensional motives, Enriques varieties, generalized Kummer varieties}

\subjclass[2010]{14C15, 14C25, 14C30.}

\maketitle

\section{Introduction}

For a smooth complex projective variety $X$, let $A_jX$ denote the Chow group of dimension $j$ algebraic cycles on $X$ modulo rational equivalence. Let $A_j^{hom}(X)\subset A_j(X)$ denote the subgroup of homologically trivial cycles.
Other than the case of divisors ($j=\dim X-1$), Chow groups are in general still poorly understood. For example, there is the famous conjecture of Bloch:

\begin{conjecture}[Bloch \cite{B}]\label{bloch} Let $X$ be a smooth projective complex variety. The following are equivalent:

\noindent
(\rom1) the Albanese morphism $A^{hom}_0{}(X)\to\alb(X)$ is an isomorphism;

\noindent
(\rom2) the Hodge numbers $h^{j,0}(X)$ are $0$ for $j\ge 2$.
\end{conjecture}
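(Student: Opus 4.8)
I would prove the two implications separately. \emph{The implication (\rom1) $\Rightarrow$ (\rom2)} is within reach. Suppose the Albanese map $A^{hom}_0(X)\to\alb(X)$ is an isomorphism; the idea is to transfer the resulting ``smallness'' of $A_0(X)$ to cohomology through a decomposition of the diagonal. By the classical spreading-out arguments (Bloch, Bloch--Srinivas, Mumford--Roitman), representability of $A_0(X)$ forces $A_0(X)$ to be supported on a curve $C\subset X$, i.e. $A_0(C)_{\QQ}\to A_0(X)_{\QQ}$ is surjective, and this yields a decomposition
\[
\Delta_X \;=\; Z_1 + Z_2 \qquad\text{in } A_{\dim X}(X\times X)_{\QQ},
\]
with $Z_1$ supported on $D\times X$ for a divisor $D\subset X$ and $Z_2$ supported on $X\times C$. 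Letting this correspondence act on $H^{j,0}(X)$, the action of $Z_2$ factors through $H^{j,0}(\wt C)$, which vanishes once $j\ge2$, while the action of $Z_1$ factors through $H^{j,0}(\wt D)$ and is disposed of by an induction on $\dim X$ (using the weak Lefschetz theorem to replace $D$ by a smooth ample divisor). Hence the identity acts as $0$ on $H^{j,0}(X)$ for $j\ge2$, so $h^{j,0}(X)=0$ for $j\ge2$.

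\emph{The implication (\rom2) $\Rightarrow$ (\rom1)} is the genuine content of the conjecture and is open in general; the only systematic strategy I know proceeds under the extra hypothesis that the Chow motive $h(X)$ is finite--dimensional in the sense of Kimura and O'Sullivan --- the hypothesis that becomes available for varieties in the tensor subcategory generated by curves, and in particular (through their generalized Kummer presentation) for the Enriques varieties of this paper. Granting finite--dimensionality, the plan would be: (a) produce a Chow--K\"unneth decomposition of Murre type, refined by coniveau, so that the Albanese kernel $T(X):=\ker\bigl(A^{hom}_0(X)\to\alb(X)\bigr)$ is carried by the transcendental summands $t^i(X)$ of $h^i(X)$ for $i$ in the relevant range; (b) note that the hypothesis $h^{j,0}(X)=0$ for $j\ge2$ --- equivalently $H^j(X,\OO_X)=0$ for $j\ge2$ --- forces the cohomology of each such $t^i(X)$ to have positive Hodge coniveau, hence, via the generalized Hodge conjecture, to be supported on a proper subvariety (for a surface and $i=2$ this is just $H^2_{\mathrm{tr}}(X)=0$, immediate from the Lefschetz $(1,1)$ theorem); (c) conclude, by a Bloch--Srinivas decomposition of the diagonal of $t^i(X)$ together with the nilpotence theorem for finite--dimensional motives (a homologically trivial self-correspondence is nilpotent), that $A_0(t^i(X))=0$, whence $T(X)=0$ and the Albanese map is an isomorphism.

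\emph{Main obstacle.} Steps (a)--(c) each rest on a major open conjecture, which is exactly why the argument cannot be run unconditionally: finite--dimensionality of $h(X)$ is presently known only for motives built from curves (abelian varieties, Kummer and generalized Kummer varieties, Fermat hypersurfaces, certain surfaces with $p_g=0$, $\ldots$); a Chow--K\"unneth decomposition with the requisite vanishing properties is the subject of Murre's conjectures; and passing from Hodge coniveau to geometric coniveau is the generalized Hodge conjecture. A proof of Bloch's conjecture in the stated generality is therefore not to be expected; the feasible program --- the one this paper carries out for every known irreducible Enriques variety of index larger than $2$ --- is instead to establish the conjecture for families of varieties whose motive is provably finite--dimensional, by reducing, via their defining generalized Kummer structure, to cases where Bloch's conjecture is already known.
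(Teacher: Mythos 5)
This statement is labeled a conjecture in the paper and no proof is given there: the paper only remarks that the implication (\rom1) $\Rightarrow$ (\rom2) is a theorem of Mumford and Bloch--Srinivas, while (\rom2) $\Rightarrow$ (\rom1) is open, and your proposal correctly reflects exactly this state of affairs, with your sketch of the known implication being the standard Bloch--Srinivas decomposition of the diagonal cited in the paper. The only slight inaccuracy is in how you dispose of the term $Z_1$ supported on $D\times X$: no induction on dimension is needed, since the action of $Z_1$ on $H^j(X,\QQ)$ factors through the Gysin image of $H^{j-2}(\wt D,\QQ)$, which lies in coniveau $\ge 1$ and hence meets $H^{j,0}(X)$ trivially; your conditional program for the converse direction is a reasonable description of why it remains open and of the finite--dimensionality strategy the paper actually deploys for its main theorem.
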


The implication from (\rom1) to (\rom2) is actually a theorem \cite{Mum}, \cite{BS}. The conjectural part is the reverse implication, which has been verified for surfaces of Kodaira dimension less than $2$ \cite{BKS}, but is wide open for surfaces of general type (cf. \cite{PW}, \cite{V8} for some examples of surfaces where conjecture \ref{bloch} is verified).

Interesting examples of varieties with vanishing Hodge numbers $h^{j,0}(X)=0$ for all $j\ge 1$ are given by {\em Enriques varieties\/}. These varieties have been defined and studied by Boissi\`ere, Nieper-Wi{\ss}kirchen and Sarti in \cite{BNS} (and independently, with a somewhat different definition, by Oguiso--Schr\"oer in \cite{OS}). As the name suggests, Enriques varieties are higher--dimensional generalizations of Enriques surfaces. In the same way that Enriques surfaces are closely related to $K3$ surfaces, the study of Enriques varieties is intimately entwined with that of hyperk\"ahler varieties. By definition, an Enriques variety $X$ has the property that some multiple $dK_X$ of the canonical divisor is trivial; the smallest such positive integer $d$ is called the {\em index\/} of $X$.

It is natural to ask whether one can prove Bloch's conjecture for these varieties, i.e.

\begin{conjecture}\label{en} Let $X$ be an Enriques variety (in the sense of \cite{BNS}). Then
  \[ A_0(X)=\ZZ\ .\]
  \end{conjecture}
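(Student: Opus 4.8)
The plan is to prove Conjecture \ref{en} for all currently known irreducible Enriques varieties of index $d>2$; all such known examples are built from generalized Kummer varieties, so each such $X$ is a free quotient $X=K_n(A)/G$ with $A$ an abelian surface carrying complex multiplication, $K_n(A)$ the generalized Kummer variety of dimension $2n=2(d-1)$, and $G=\langle g\rangle\cong\ZZ/d$ generated by $g=t_a\circ\varphi$, where $\varphi$ is induced by an order-$d$ automorphism of $A$ having no eigenvalue $1$ on $H^1(A)$ (this is what allows the action to be free after a suitable translation $t_a$), and where $\varphi$ acts on the holomorphic $2$-form of $A$, hence on the symplectic form of $K_n(A)$, by a primitive $d$-th root of unity $\eta$. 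I will use two inputs from the cited literature: the Chow motive $\mathfrak h(K_n(A))_\QQ$ is a direct summand of a finite sum of Tate twists of motives $\mathfrak h(A)^{\otimes m}$, in a way equivariant for automorphisms of $K_n(A)$ induced from $A$; and Chow motives of abelian type are finite-dimensional in the sense of Kimura--O'Sullivan.

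First I would reduce to rational coefficients. Since $X$ is an Enriques variety, $h^{1,0}(X)=0$, so $\alb(X)=0$, and Roitman's theorem gives that $A_0^{hom}(X)$ is torsion free; hence it suffices to show $A_0^{hom}(X)_\QQ=0$, which together with the degree map yields $A_0(X)=\ZZ$. Next, as the quotient map $\pi\colon K_n(A)\to X$ is \'etale of degree $d$, the correspondences $\pi^\ast$ and $\pi_\ast$ identify $A_0(X)_\QQ$ with the invariants $\bigl(A_0(K_n(A))_\QQ\bigr)^{g}$, compatibly with homological equivalence, so the problem becomes $\bigl(A_0^{hom}(K_n(A))_\QQ\bigr)^{g}=0$.

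The heart of the argument is to track, through the motivic decomposition, which summands of $\mathfrak h(K_n(A))$ actually contribute to $A_0(K_n(A))_\QQ$ and how $g$ acts on them. A translation acts trivially on $H^\ast(K_n(A))$, so on cohomology $g^\ast=\varphi^\ast$ and only $\varphi$ matters. The Lefschetz summand $\LL^{2n}$ contributes exactly $\QQ$, generated by the class of a point, and is $g$-invariant. A dimension count shows that every other summand reaching $A_0$ is (a Schur functor of) the transcendental motive $\mathfrak t_2(A)$, essentially $\operatorname{Sym}^n\mathfrak t_2(A)$, with all $\operatorname{Sym}^j\mathfrak t_2(A)$ for $j<n$ contributing only to $A_j$ with $j>0$ (for $n\ge 2$). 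On the cohomology of $\operatorname{Sym}^n\mathfrak t_2(A)$ --- a sub-Hodge structure of $\operatorname{Sym}^n H^2_{\mathrm{tr}}(A)$, which for these CM surfaces is $\operatorname{Sym}^n(H^{2,0}(A)\oplus H^{0,2}(A))$ --- the operator $\varphi^\ast$ is semisimple with eigenvalues the monomials in $\eta,\bar\eta$; since $\eta$ is a primitive $d$-th root with $d>2$, the only eigenvalue $1$ occurs (when $d=3$) on a single class of Hodge type $(k,k)$, which is a Hodge class on $K_n(A)$, hence algebraic (the Hodge conjecture holds for the relevant powers of $A$, $A$ having complex multiplication), hence splits off as a Tate summand not reaching $A_0$. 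Thus on the complement of the Tate summands $\varphi^\ast$ has no invariants, the averaging projector $\tfrac1d\sum_i (g_\ast)^i$ is cohomologically trivial there, and finite-dimensionality forces it to vanish on the Chow groups. Since the Tate summands contribute nothing to $A_0^{hom}$, we obtain $\bigl(A_0^{hom}(K_n(A))_\QQ\bigr)^{g}=0$, and hence $A_0(X)=\ZZ$.

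The main obstacle is the third step: pinning down precisely, in $G$-equivariant form, the list of summands of $\mathfrak h(K_n(A))$ that reach zero-cycles, and checking that the only possible $g$-fixed contributions are Tate classes which either split off or contribute to $A_j$ with $j>0$. This is also exactly where $d>2$ enters: for $d=2$ one would have $\eta=-1$, so $\eta^{2k}=1$, and $\varphi$ may act as the identity on the transcendental layer governing $A_0$, so the invariants need not vanish; moreover the index-$2$ Enriques varieties of dimension $>2$ are of Hilbert-scheme-of-$K3$ type, whose motives are not known to be finite-dimensional, so the method does not even get started. Everything else is formal given the two cited motivic inputs.
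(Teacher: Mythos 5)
First, a remark on scope: the statement you are addressing is stated in the paper only as a \emph{conjecture}, and the paper (like you) proves it only for the known examples $X=K_n(A)/G$ with $n\le 4$; so your restriction to those cases is consistent with what the paper actually establishes (its Theorem 3.1). Your overall architecture is also the paper's: reduce to rational coefficients by Rojtman, identify $A_0(X)_{\QQ}$ with the $G$--invariants of $A_0(K_n(A))_{\QQ}$, feed in Xu's splitting of $h(K_n(A))$ into abelian--type motives, use non--symplecticity of $\phi$ as the source of vanishing, and use finite--dimensionality to promote cohomological vanishing to Chow--theoretic vanishing.

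The genuine gap sits exactly at the step you flag as ``the main obstacle''. Your claim that, apart from the Tate summand, every summand of $h(K_n(A))$ reaching $A_0$ is a Schur functor of the transcendental motive $t_2(A)$ is not correct. Through the split injection $A_0(K_n(A))_{\QQ}\hookrightarrow A_0(A^{(n)})_{\QQ}$, zero--cycles also receive contributions from the \emph{odd} cohomology of $A$: symmetrized products of copies of $h^3(A)$ (Albanese--type pieces) land in the \emph{even}--graded part of $A_0(K)_{\QQ}$, so they are not killed by Lin's parity statement, and your $\varphi$--eigenvalue argument on $\hbox{Sym}^n(H^{2,0}\oplus H^{0,2})$ says nothing about them. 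In the paper these are precisely the hardest summands: after replacing $A$ by $A^\prime=A/\langle\phi_0\rangle$ (which has $p_g(A^\prime)=0$, so the entire transcendental $H^2$--part dies for free --- this is the paper's substitute for your eigenvalue computation), the residual obstruction for $n=4$ is $\hbox{Sym}^4 h^3(A^\prime)$, which is genuinely $\phi$--invariant and is disposed of only by observing that $\wedge^4 H^3(A^\prime,\QQ)$ is a one--dimensional space of Hodge classes, invoking the Hodge conjecture for self--products of abelian surfaces, and then applying nilpotence. Your sketch contains no mechanism for these pieces. As a secondary point, even on $\hbox{Sym}^n(H^{2,0}\oplus H^{0,2})$ your assertion that eigenvalue $1$ occurs only on classes of type $(k,k)$ fails as stated: for $d=3$, $n=3$ the class $\omega_A^{\otimes 3}$ has eigenvalue $\eta^3=1$ and Hodge type $(6,0)$; ruling out its contribution requires exactly the careful, Tate--twist--aware bookkeeping of Xu's decomposition that your ``dimension count'' elides. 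So the proposal identifies the right toolbox but does not close the argument.
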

  
 The main result of this note gives a partial answer to conjecture \ref{en}:
 
 \begin{nonumbering}[=theorem \ref{main}]  Let $X$ be an Enriques variety of dimension $\le 6$. Assume $X$ is a quotient 
  \[ X=K/G\ ,\]
  where $K=K_n(A)$ is a generalized Kummer variety and $G$ is a group of automorphisms acting freely and induced by a finite order automorphism of $A$.
  Then
    \[ A_0(X)=\ZZ\ .\]
   \end{nonumbering}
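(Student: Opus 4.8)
The plan is to use \'etale descent to pass from $X$ to $K=K_n(A)$, then the known structure of the Chow motive of a generalized Kummer variety to reduce to a vanishing statement for zero-cycles on $A^n$, and finally Beauville's decomposition together with the fact that the holomorphic symplectic form carries a nontrivial character of $G$. First the formal part: since $\dim K_n(A)=2n$ the hypothesis forces $n\le 3$ and $A$ an abelian surface, and since $X$ is an Enriques variety, $h^{j,0}(X)=0$ for all $j\ge 1$; in particular $\alb(X)=0$, so by Roitman's theorem $A_0^{hom}(X)$ is torsion-free, while a point of $K$ pushes forward under the unramified map $\pi\colon K\to X$ to a degree-one zero-cycle, so $A_0(X)/A_0^{hom}(X)\cong\ZZ$. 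It therefore suffices to prove $A_0^{hom}(X)\otimes\QQ=0$ --- a torsion-free group with zero rationalization being zero --- and, $\pi$ being finite \'etale, $\pi^\ast$ identifies $A_0^{hom}(X)_\QQ$ with $\bigl(A_0^{hom}(K)_\QQ\bigr)^G$.

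Next I would invoke the de Cataldo--Migliorini--type decomposition of $\mathfrak h(K_n(A))$ into Tate twists $\mathfrak h\bigl(A^{\ell(\mu)-1}/\mathfrak H_\mu\bigr)\bigl(n+1-\ell(\mu)\bigr)$ indexed by partitions $\mu\vdash n+1$, with $\mathfrak H_\mu$ finite, $\ell(\mu)$ the number of parts, and $\dim\bigl(A^{\ell(\mu)-1}/\mathfrak H_\mu\bigr)=2(\ell(\mu)-1)$. Being canonical, this decomposition is $G$-equivariant, $G$ acting on the $\mu=(1^{n+1})$ summand $\mathfrak h\bigl(A^n/\Sigma_{n+1}\bigr)$ through its diagonal action in the ``sum-zero'' model $A^n=\{\sum a_i = 0\}\subset A^{n+1}$. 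Since $CH^c$ vanishes above the dimension, the $\mu$-summand contributes to $CH^{2n}=A_0$ only when $n+1\le\ell(\mu)$, i.e.\ only for $\mu=(1^{n+1})$, which gives a $G$-equivariant isomorphism
\[ A_0^{hom}\bigl(K_n(A)\bigr)_\QQ\;\cong\;\bigl(A_0^{hom}(A^n)_\QQ\bigr)^{\Sigma_{n+1}} \]
(and incidentally shows $\mathfrak h(X)$ is finite-dimensional). The theorem is thereby reduced to proving $\bigl(A_0^{hom}(A^n)_\QQ\bigr)^{G\times\Sigma_{n+1}}=0$ for $n\le 3$, where $G$ acts diagonally and $\Sigma_{n+1}$ by the permutation action twisted as above.

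For this I would expand $CH_0^{hom}(A^n)_\QQ$ using Beauville's decomposition, the Chow--K\"unneth decomposition $\mathfrak h(A)=\bigoplus_{j=0}^{4}\mathfrak h^j(A)$ and the refinement $\mathfrak h^2(A)=\LL^{\oplus\rho}\oplus\mathfrak t^2(A)$ into Lefschetz part and transcendental motive; for $n\le 3$ this is an explicit finite list of summands. The engine is that $G$ acts on $H^{2,0}(A)=\C\sigma_A$ through a character which is nontrivial --- $\sigma_K$ being induced by $\sigma_A$ and $(\C\sigma_K)^G=H^{2,0}(X)=0$ since $X$ is Enriques --- and that for $g\in G$ with $g^\ast$ acting nontrivially on $\sigma_A$, the operator $g^\ast$ (semisimple of finite order) has \emph{no eigenvalue $1$} on $\mathfrak t^2(A)$: an eigenvalue not Galois-conjugate to the one on $\sigma_A$ would cut out a sub-Hodge-structure of $H^2_{tr}(A)$ of type $(1,1)$, hence a space of divisor classes by the Lefschetz $(1,1)$-theorem, against transcendentality. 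Consequently the summands that ``see'' $\mathfrak t^2(A)$ carry no $G$-invariants; for its first power this is exactly $CH_0(K_n(A))_{(2)}\cong\mathrm{Hom}\bigl(\mathbf 1(-2),\mathfrak t^2(A)\bigr)$, and for the higher tensor powers of $\mathfrak t^2(A)$ occurring in $CH_0(K_n(A))_{(2i)}$ for $2\le i\le n$ the $\QQ(\zeta)$-module structure forced on $\mathfrak t^2(A)$ confines any eigenvalue-$1$ subspace to Lefschetz type (or to Hodge types that preclude algebraic classes), so $\mathrm{Hom}(\mathbf 1(-\bullet),-)$ of it vanishes. The remaining summands, built without transcendental content, are of Lefschetz type and, after $\Sigma_{n+1}$-symmetrization, fall either into $CH_0(K_n(A))_{(0)}=\QQ$ --- spanned by a point, hence outside $A_0^{hom}$ --- or into the odd-graded pieces $CH_0(K_n(A))_{(s)}$, which vanish by the known results on the Chow motive of generalized Kummer varieties. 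Hence $\bigl(A_0^{hom}(A^n)_\QQ\bigr)^{G\times\Sigma_{n+1}}=0$.

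The main obstacle, and the real reason for the bound $\dim X\le 6$, is the last step: controlling the interaction of the ($\Sigma_{n+1}$-, twisted-)symmetrization with the $G$-action on each graded piece of $CH_0(A^n)_\QQ$ and checking, summand by summand, that every one is either annihilated by symmetrization or carries a nontrivial $G$-character (or has Hodge numbers forbidding a contribution). The delicate point is that $\wedge^2\mathfrak h^1(A)=\mathfrak h^2(A)$ hides transcendental content inside summands built from $\mathfrak h^1(A)$, so bookkeeping by K\"unneth multidegree alone is not enough; one needs the precise de Cataldo--Migliorini stratification of $K_n(A)$ and, for the higher graded pieces, a hands-on analysis of the $G$-action on tensor powers of $\mathfrak t^2(A)$. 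For $n\le 3$ the strata and K\"unneth components are few enough that this is feasible; in higher dimension the combinatorics proliferates and, moreover, controlling the relevant pieces of $CH_0(A^n)$ would seem to need Beauville-type vanishing conjectures not presently available.
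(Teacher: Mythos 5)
Your overall skeleton does track the paper's: reduce via Roitman to $A_0^{hom}(X)_{\QQ}=0$, use the de Cataldo--Migliorini/Xu decomposition to see that only the stratum indexed by the partition into $1$'s contributes to zero--cycles, exploit that $\phi_0$ must act non--symplectically on $A$ (because $H^{2,0}(X)=0$), and kill the odd graded pieces by Lin's theorem. The genuine gap is in how you kill the remaining even pieces. After your reduction, $G$ acts \emph{diagonally} on the sum--zero abelian variety, so the pieces of $A_0$ you must control involve the diagonal invariants $(t^2(A)^{\otimes k})^{G}$ for $k\ge 2$, not $((t^2(A))^{G})^{\otimes k}$. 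Your key fact --- a non--symplectic $g^\ast$ has no eigenvalue $1$ on the transcendental lattice $T_{\QQ}$, hence $(t^2(A))^{G}=0$ --- only disposes of $k=1$. Already for $k=2$ the diagonal invariants are nonzero in cohomology ($\sigma\otimes\bar\sigma$ always survives, and whenever $T_{\QQ}$ has a $(1,1)$--part so do classes of type $(3,1)$ and $(1,3)$), so the corresponding motivic summand is not zero. Your proposed remedy --- that such summands are ``of Lefschetz type or of Hodge types that preclude algebraic classes, so $\mathrm{Hom}(\mathbf 1(-\bullet),-)$ of them vanishes'' --- is invalid in its second branch: for a finite--dimensional motive $M$, the absence of Hodge classes in $H^{2k}(M)$ does not force $\mathrm{Hom}(\mathbf 1(-k),M)=0$. (Take $M=t^2(S)$ for a surface $S$ with $p_g>0$ and $k=2$: every such Hom is homologically trivial for weight reasons, yet the group is the Albanese kernel, which is enormous.) So the assertion ``summands that see $t^2(A)$ carry no $G$--invariants'' fails for tensor powers, and nothing in your argument replaces it; this is not a bookkeeping issue one can settle ``summand by summand'' for small $n$, but a missing idea.

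The paper engineers around exactly this point by one further application of Xu's results: the isomorphism $h(A\times K^{(\lambda)}_\tau)\cong h(A^{(\lambda)})$, applied $G$--equivariantly (proposition \ref{equiv}), uses translations to convert the \emph{diagonal} quotient of the sum--zero locus into the symmetric power $(A^\prime)^{(n)}$ of the quotient surface $A^\prime=A/\langle\phi_0\rangle$. Since $p_g(A^\prime)=0$ and $A^\prime$ has finite--dimensional motive, $t^2(A^\prime)=0$ outright, so no transcendental content survives in \emph{any} tensor factor; the only non--formal case left is $\mathrm{Sym}^4h^3(A^\prime)$ for $n=4$, which is handled because $\wedge^4H^3(A^\prime,\QQ)$ is one--dimensional and algebraic (Hodge conjecture for self--products of abelian surfaces) combined with the nilpotence theorem. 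To complete your version you would either need this ``untwisting by translations'' step, or an independent argument that the diagonal $G$--invariants of $t^2(A)^{\otimes k}$ do not contribute to $A_0$ --- which, for the non--Hodge part of $(T\otimes T)^{G}$, would seem to require generalized--Hodge--conjecture--type input (it does happen to be vacuous for the specific BNS examples, where $T_{\QQ}$ is two--dimensional, but not for the generality in which the theorem and your proof are stated).
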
 

Theorem \ref{main} applies to all known examples of irreducible Enriques varieties with index $>2$ (these examples can be found in \cite{BNS} and \cite{OS}). The proof of theorem \ref{main} is a straightforward application of results of Xu \cite{Xu} and Lin \cite{Lin}, combined with Kimura's theory of {\em finite--dimensional 
motives\/} \cite{Kim}.

As a corollary (corollary \ref{BS}), varieties as in theorem \ref{main} verify certain cases of the generalized Hodge conjecture.

\vskip0.6cm

\begin{convention} In this note, the word {\sl variety\/} will refer to a reduced irreducible scheme of finite type over $\C$. 

For any variety $X$, we will denote by $A_jX$ the Chow group of $j$--dimensional cycles on $X$, and we will write
  \[ A_j(X)_{\QQ}:= A_j(X)\otimes_{\ZZ} \QQ \]
  for Chow groups
with rational coefficients.
For $X$ smooth of dimension $n$ the notations $A_jX$ and $A^{n-j}X$ will be used interchangeably. 

The notations 
$A^j_{hom}(X)$ and $A^j_{AJ}(X)$ will be used to indicate the subgroups of 
homologically, resp. Abel--Jacobi trivial cycles.
The contravariant category of Chow motives (i.e., pure motives with respect to rational equivalence as in \cite{Sc}, \cite{MNP}) will be denoted $\MM_{\rm rat}$. The category of pure motives with respect to homological equivalence will be denoted $\MM_{\rm hom}$.
\end{convention}

\section{Preliminary material}

\subsection{Quotient varieties}

\begin{definition} A {\em projective quotient variety\/} is a variety
  \[ X=Y/G\ ,\]
  where $Y$ is a smooth projective variety and $G\subset\hbox{Aut}(Y)$ is a finite group.
  \end{definition}
  
 \begin{proposition}[Fulton \cite{F}]\label{quot} Let $X$ be a projective quotient variety of dimension $n$. Let $A^\ast(X)$ denote the operational Chow cohomology ring. The natural map
   \[ A^i(X)_{\QQ}\ \to\ A_{n-i}(X)_{\QQ} \]
   is an isomorphism for all $i$.
   \end{proposition}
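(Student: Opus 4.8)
The plan is to reduce the assertion to Poincar\'e duality on the smooth variety $Y$, by means of the quotient morphism $\pi\colon Y\to X$. Recall that for a smooth variety $Y$ of dimension $n$ cap product with the fundamental class is an \emph{isomorphism} $A^i(Y)\xrightarrow{\ \sim\ }A_{n-i}(Y)$, $c\mapsto c\cap[Y]$, even with integral coefficients; this is the basic Poincar\'e duality theorem for operational Chow cohomology. Since $G\subset\mathrm{Aut}(Y)$ is finite and acts faithfully, $\pi$ is finite surjective of degree $|G|$, so $\pi_*[Y]=|G|\,[X]$, and by \cite[Ex.~1.7.6]{F} the proper pushforward identifies $A_*(X)_\QQ$ with the invariant part $A_*(Y)^G_\QQ$ (with inverse $\tfrac1{|G|}\pi^*$, where $\pi^*$ is the pullback attached to a quotient map). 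I will use this, the averaging idempotent $\tfrac1{|G|}\sum_{g\in G}g^*$ on operational classes and Chow groups, and the projection formula $\pi_*(\pi^*c\cap\beta)=c\cap\pi_*\beta$.

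The core of the proof is to show that $\pi^*\colon A^i(X)_\QQ\to A^i(Y)^G_\QQ$ is an isomorphism. For injectivity, if $\pi^*c=0$ then for every $X'\to X$ and every $\alpha\in A_*(X')_\QQ$, writing $\alpha=\pi'_*\beta$ for the finite surjective base change $\pi'\colon Y\times_X X'\to X'$ (pushforward along a finite surjective map being rationally surjective), the projection formula gives $c\cap\alpha=\pi'_*\big((\pi^*c)|_{Y\times_X X'}\cap\beta\big)=0$; hence $c$ kills $A_*(X')_\QQ$ for all $X'\to X$, so $c=0$. For surjectivity I would construct the inverse: given $\tilde c\in A^i(Y)^G_\QQ$, set $c\cap\alpha:=\tfrac1{|G|}\pi'_*\big(\tilde c|_{Y\times_X X'}\cap\beta\big)$ for $\alpha=\pi'_*\beta$. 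Here $G$-invariance is exactly what makes this well defined: two lifts $\beta$ differ by an element of $(\ker\pi'_*)_\QQ$, spanned by classes $\gamma-g_*\gamma$, and $\pi'_*\big(\tilde c\cap(\gamma-g_*\gamma)\big)=\pi'_*(\tilde c\cap\gamma)-\pi'_*g_*(g^*\tilde c\cap\gamma)=0$ because $\pi'\circ g=\pi'$ and $g^*\tilde c=\tilde c$. One then checks that this family of operations obeys the axioms of an operational class (compatibility with flat pullback, proper pushforward, and intersection with Cartier divisors), using the corresponding compatibilities upstairs on $Y\times_X X'$, and that the two constructions are mutually inverse to $\pi^*$.

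Granting this, $\pi^*$ is an isomorphism $A^i(X)_\QQ\xrightarrow{\ \sim\ }A^i(Y)^G_\QQ$; composing with the $G$-equivariant duality isomorphism $A^i(Y)^G_\QQ\xrightarrow{\ \sim\ }A_{n-i}(Y)^G_\QQ$ and with $\tfrac1{|G|}\pi_*\colon A_{n-i}(Y)^G_\QQ\xrightarrow{\ \sim\ }A_{n-i}(X)_\QQ$ yields an isomorphism $A^i(X)_\QQ\to A_{n-i}(X)_\QQ$. That it coincides with the natural map of the statement follows once more from the projection formula: it sends $c$ to $\tfrac1{|G|}\pi_*(\pi^*c\cap[Y])=\tfrac1{|G|}\,c\cap\pi_*[Y]=c\cap[X]$.

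The main obstacle is the descent step above: unlike everything else, it genuinely exploits that an operational class is a \emph{system} of operations on all schemes over $X$, so one must verify well-definedness and the operational axioms after an arbitrary base change $X'\to X$. The only real external input is the rational comparison $A_*(Y/G)_\QQ\cong A_*(Y)^G_\QQ$ together with the attendant description of $\ker\pi'_*$ for the base-changed quotient maps; the remainder is bookkeeping with the idempotent $\tfrac1{|G|}\sum_g g_*$ and the projection formula, resting on Poincar\'e duality for the smooth variety $Y$.
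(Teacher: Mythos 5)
The paper does not actually prove this proposition: its ``proof'' is the single line ``This is \cite[Example 17.4.10]{F}'', i.e.\ everything is delegated to Fulton. Your argument is, in substance, a correct reconstruction of that example: Poincar\'e duality for operational Chow cohomology on the smooth cover $Y$, the identification $A_*(X)_{\QQ}\cong A_*(Y)^G_{\QQ}$ of \cite[Example 1.7.6]{F}, and descent of operational classes along $\pi$ via the averaging idempotent and the projection formula, so that $c\mapsto c\cap[X]$ is matched with $\tfrac1{|G|}\pi_*(\pi^*c\cap[Y])$. The one step that deserves to be made explicit is the description of $\ker\pi'_*$ for the base-changed maps $\pi'\colon Y\times_X X'\to X'$: these are in general not themselves quotient maps (the canonical map $(Y\times_X X')/G\to X'$ is only a finite surjective bijection), but since $G$ still acts transitively on the geometric fibres of $\pi'$, the components of $\pi'^{-1}(Z')$ dominating a given integral subvariety $Z'\subset X'$ form a single $G$-orbit, all of the same degree over $Z'$, and the argument of \cite[Example 1.7.6]{F} then goes through with rational coefficients to show that $(\ker\pi'_*)_{\QQ}$ is spanned by the classes $\gamma-g_*\gamma$. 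You correctly single this out as the essential external input; with it supplied, your proof is complete and coincides with the argument of the example the paper cites.
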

   
   \begin{proof} This is \cite[Example 17.4.10]{F}.
      \end{proof}

\begin{remark} It follows from proposition \ref{quot} that the formalism of correspondences goes through unchanged for projective quotient varieties (this is also noted in \cite[Example 16.1.13]{F}). We can thus consider motives $(X,p,0)\in\MM_{\rm rat}$, where $X$ is a projective quotient variety and $p\in A^n(X\times X)_{\QQ}$ is a projector. For a projective quotient variety $X=Y/G$, one readily proves (using Manin's identity principle) that there is an isomorphism
  \[  h(X)\cong h(Y)^G:=(Y,\Delta^G_Y,0)\ \ \ \hbox{in}\ \MM_{\rm rat}\ ,\]
  where $\Delta^G_Y$ denotes the idempotent ${1\over \vert G\vert}{\sum_{g\in G}}\Gamma_g$.  
  \end{remark}

\subsection{Finite--dimensionality}

We refer to \cite{Kim}, \cite{An}, \cite{MNP}, \cite{J4} for basics on the notion of finite--dimensional motive. 
An essential property of varieties with finite--dimensional motive is embodied by the nilpotence theorem:

\begin{theorem}[Kimura \cite{Kim}]\label{nilp} Let $X$ be a smooth projective variety of dimension $n$ with finite--dimensional motive. Let $\Gamma\in A^n(X\times X)_{\QQ}$ be a correspondence which is numerically trivial. Then there is $N\in\NN$ such that
     \[ \Gamma^{\circ N}=0\ \ \ \ \in A^n(X\times X)_{\QQ}\ .\]
\end{theorem}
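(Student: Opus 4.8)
The plan is to recognize the statement as the assertion that a numerically trivial endomorphism of a finite--dimensional motive is nilpotent, and to prove this via the categorical trace together with a Cayley--Hamilton argument. First I would reformulate the problem: since $X$ is smooth projective, the group $A^n(X\times X)_{\QQ}$ equipped with the composition product $\circ$ is exactly the endomorphism ring $\mathrm{End}_{\MM_{\rm rat}}(h(X))$, and $\Gamma^{\circ N}$ is the $N$--fold composite of $\Gamma$ viewed as an endomorphism $f:=\Gamma$ of the motive $M:=h(X)$. Saying that $\Gamma$ is numerically trivial means precisely that $f$ lies in the kernel $\mathfrak n:=\ker\big(\mathrm{End}_{\MM_{\rm rat}}(M)\to \mathrm{End}_{\MM_{\rm num}}(M)\big)$ of the projection to the category of numerical motives, a two--sided ideal. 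The hypothesis that $M=h(X)$ is finite--dimensional furnishes a decomposition $M=M^+\oplus M^-$ with $M^+$ evenly finite--dimensional ($\wedge^{a}M^+=0$ for some $a$) and $M^-$ oddly finite--dimensional ($\mathrm{Sym}^{b}M^-=0$ for some $b$). Thus the theorem reduces to showing that every element of $\mathfrak n$ is nilpotent.

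The main tool is the categorical trace. Because $X$ is smooth projective, $M$ is rigid (its dual is a Tate twist of $M$, via Poincar\'e duality), so every endomorphism has a well--defined trace $\mathrm{tr}(g)\in\mathrm{End}(\mathbf 1)=\QQ$; concretely this is an intersection number against the diagonal. The projection $\MM_{\rm rat}\to\MM_{\rm num}$ is a tensor functor inducing the identity on $\mathrm{End}(\mathbf 1)=\QQ$, so it respects traces. Hence for $f\in\mathfrak n$ and every $k\ge 1$ the composite $f^{\circ k}$ again lies in $\mathfrak n$ (as $\mathfrak n$ is an ideal), and therefore $\mathrm{tr}(f^{\circ k})=0$.

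I would then treat the evenly finite--dimensional case $\wedge^{a}M=0$, where two standard facts about $\QQ$--linear rigid tensor categories apply. The Newton--type identity $\sum_{i\ge 0}(-1)^i\,\mathrm{tr}(\wedge^i f)\,t^i=\exp\!\big(-\sum_{k\ge1}\tfrac{1}{k}\,\mathrm{tr}(f^{\circ k})\,t^k\big)$ expresses the ``elementary symmetric functions'' $\mathrm{tr}(\wedge^i f)$ through the power sums $\mathrm{tr}(f^{\circ k})$; since the latter all vanish, the right--hand side equals $1$, forcing $\mathrm{tr}(\wedge^i f)=0$ for every $i\ge 1$. On the other hand, the tensor Cayley--Hamilton theorem, valid because $\wedge^{a}M=0$, shows that $f$ satisfies a monic relation of the form $f^{\,a-1}=\sum_{i\ge1}(-1)^{i+1}\,\mathrm{tr}(\wedge^i f)\,f^{\,a-1-i}$. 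Combining the two gives $f^{\,a-1}=0$. The oddly finite--dimensional case $\mathrm{Sym}^{b}M=0$ is entirely parallel, using the complete homogeneous symmetric functions $\mathrm{tr}(\mathrm{Sym}^i f)$, the reciprocal generating identity $\sum_{i\ge0}\mathrm{tr}(\mathrm{Sym}^i f)\,t^i=\exp\!\big(\sum_{k\ge1}\tfrac1k\,\mathrm{tr}(f^{\circ k})\,t^k\big)$, and the corresponding Cayley--Hamilton relation, again yielding nilpotence.

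It remains to assemble the mixed case $M=M^+\oplus M^-$, which I expect to be the main obstacle. Writing $\pi^\pm$ for the two idempotents, the diagonal blocks $\pi^+ f\pi^+$ and $\pi^- f\pi^-$ are numerically trivial endomorphisms of $M^+$ and $M^-$, hence nilpotent by the even and odd cases. The off--diagonal blocks $\pi^+ f\pi^-$ and $\pi^- f\pi^+$ do not carry a trace argument directly, but their composites land in the diagonal blocks and are again numerically trivial, hence nilpotent. The delicate point is to conclude from this that $f$ itself---not merely its blocks---is nilpotent; this requires promoting the individual nilpotence statements to the assertion that $\mathfrak n$ is a \emph{nilpotent} ideal (with a uniform exponent), so that the finitely many cross terms can only contribute boundedly before every product vanishes. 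Granting this bookkeeping, some power $f^{\circ N}=\Gamma^{\circ N}$ is zero, which is the claim. The technical heart, and the step I would prepare most carefully, is the tensor Cayley--Hamilton theorem and the symmetric--function identities in the abstract category $\MM_{\rm rat}$, since everything downstream is formal once these are in place.
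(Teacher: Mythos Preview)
The paper does not prove this theorem: it is quoted from Kimura \cite{Kim} and used as a black box, so there is no proof on the paper's side to compare with. Your sketch is a faithful outline of Kimura's own argument (cf.\ \cite[\S 7]{Kim} and the exposition in \cite{An}): the reformulation via $\mathrm{End}_{\MM_{\rm rat}}(h(X))$, the vanishing of $\mathrm{tr}(f^{\circ k})$ by compatibility of the categorical trace with the tensor functor to numerical motives, the Newton identities forcing $\mathrm{tr}(\wedge^i f)=0$, and the tensor Cayley--Hamilton giving $f^{a-1}=0$ in the purely even (resp.\ odd) case are exactly the ingredients used there.

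You are right to flag the mixed case $M=M^+\oplus M^-$ as the delicate point, and right that what is needed is nilpotence of the \emph{ideal} $\mathfrak n$, not just of its individual elements. But the step you defer as ``bookkeeping'' --- upgrading the uniform bound $g^{a-1}=0$ for every $g\in\mathfrak n\cap\mathrm{End}(M^+)$ to the ideal statement $(\mathfrak n\cap\mathrm{End}(M^+))^{P}=0$ for some $P$ --- is not a formality: in a non--commutative $\QQ$--algebra a uniformly nil ideal need not be nilpotent. Kimura obtains this via a genuinely multilinear refinement of Cayley--Hamilton (several endomorphisms at once, not just powers of one), and only then does the $2\times 2$ block combinatorics you describe conclude. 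So your plan is correct in outline, but the step you label as routine is in fact the technical heart of the mixed case and deserves the most care, not the least.
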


 Actually, the nilpotence property (for all powers of $X$) could serve as an alternative definition of finite--dimensional motive, as shown by a result of Jannsen \cite[Corollary 3.9]{J4}.
   Conjecturally, all smooth projective varieties have finite--dimensional motive \cite{Kim}. We are still far from knowing this, but at least there are quite a few non--trivial examples:

\begin{remark}
It is an embarassing fact that up till now, all examples of finite-dimensional motives happen to lie in the tensor subcategory generated by Chow motives of curves, i.e. they are ``motives of abelian type'' in the sense of \cite{V3}. On the other hand, there exist many motives that lie outside this subcategory, e.g. the motive of a very general quintic hypersurface in $\PP^3$ \cite[7.6]{D}.
\end{remark}

The notion of finite--dimensionality is easily extended to quotient varieties:

\begin{definition} Let $X=Y/G$ be a projective quotient variety. We say that $X$ has finite--dimensional motive if the motive
  \[ h(Y)^G:= (Y, \Delta^G_Y,0)\ \ \ \in \MM_{\rm rat}\]
  is finite--dimensional. (Here, $\Delta^G_Y$ denotes the idempotent ${1\over \vert G\vert}{\sum_{g\in G}}\Gamma_g \in A^n(Y\times Y)$.) 
   \end{definition}
  
  Clearly, if $Y$ has finite--dimensional motive then also $X=Y/G$ has finite--dimensional motive. The nilpotence theorem extends to this set--up:
  
  \begin{proposition}\label{quotientnilp} Let $X=Y/G$ be a projective quotient variety of dimension $n$, and assume $X$ has finite--dimensional motive. Let $\Gamma\in A^n_{num}(X\times X)_{\QQ}$. Then there is   
    $N\in\NN$ such that
     \[ \Gamma^{\circ N}=0\ \ \ \ \in A^n(X\times X)_{\QQ}\ .\]
  \end{proposition}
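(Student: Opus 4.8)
The plan is to transport $\Gamma$ along the isomorphism $h(X)\cong h(Y)^G=(Y,\Delta^G_Y,0)$ of the remark above and then invoke Kimura's nilpotence theorem on the smooth projective variety $Y$. Write $\pi\colon Y\to X$ for the quotient morphism and $\Gamma_\pi\in A^n(Y\times X)_\QQ$ for the class of its graph. In the contravariant convention, the correspondences $a:=\tfrac1{|G|}\,\Gamma_\pi$ and $b:={}^t\Gamma_\pi$ define morphisms $a\colon h(Y)^G\to h(X)$ and $b\colon h(X)\to h(Y)^G$ satisfying $a\circ b=\Delta_X$ and $b\circ a=\Delta^G_Y$; this realizes the isomorphism of the remark and is checked, via Manin's identity principle, from the standard relations $\pi_*\pi^*=|G|\cdot\mathrm{id}$ and $\pi^*\pi_*=\sum_{g\in G}g^*$. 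Given $\Gamma\in A^n(X\times X)_\QQ$, set $\gamma:=b\circ\Gamma\circ a\in\Delta^G_Y\,A^n(Y\times Y)_\QQ\,\Delta^G_Y$. Since $a\circ b=\Delta_X$ one gets $\gamma^{\circ N}=b\circ\Gamma^{\circ N}\circ a$ and $\Gamma^{\circ N}=a\circ\gamma^{\circ N}\circ b$ for every $N$, so $\Gamma^{\circ N}=0$ if and only if $\gamma^{\circ N}=0$.

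Next I would check that numerical triviality is inherited by $\gamma$. Numerical equivalence is an adequate equivalence relation, so composition of correspondences descends to $\MM_{\rm num}$, and by the remark following Proposition \ref{quot} this formalism is available also when a projective quotient variety such as $X$ (or $X\times X$) is involved. Hence $\gamma=b\circ\Gamma\circ a$, being obtained from $\Gamma$ by composing on both sides with the fixed correspondences $b$ and $a$, is numerically trivial as soon as $\Gamma$ is. (If one prefers to avoid numerical equivalence on the singular variety $X\times X$: the finite flat surjection $\pi\times\pi\colon Y\times Y\to X\times X$ satisfies $(\pi\times\pi)_*(\pi\times\pi)^*=|G|^2\cdot\mathrm{id}$, and the projection formula then shows that $\Gamma$ numerically trivial forces $(\pi\times\pi)^*\Gamma$, hence also $\gamma$, to be numerically trivial.) Either way, $\Gamma\in A^n_{num}(X\times X)_\QQ$ yields that $\gamma$ is a numerically trivial endomorphism of the motive $h(Y)^G$.

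Finally, $h(Y)^G$ is finite--dimensional by hypothesis and $\gamma$ is a numerically trivial endomorphism of it, so Kimura's nilpotence theorem provides $N\in\NN$ with $\gamma^{\circ N}=0$, whence $\Gamma^{\circ N}=0$ in $A^n(X\times X)_\QQ$ by the first paragraph. I do not expect a genuine obstacle; the only point meriting care is that Theorem \ref{nilp} is phrased for the motive $h(Z)$ of a smooth projective variety $Z$, whereas here it is applied to the direct summand $h(Y)^G=(Y,\Delta^G_Y,0)$ of $h(Y)$. This is harmless: the implication ``numerically trivial $\Rightarrow$ nilpotent'' holds for an arbitrary finite--dimensional motive (this is the generality of \cite{Kim}), and finite--dimensional motives are stable under passing to direct summands.
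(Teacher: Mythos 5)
Your argument is correct and follows essentially the same route as the paper: conjugate $\Gamma$ by the graph of the quotient morphism to obtain a numerically trivial endomorphism of the finite--dimensional motive $h(Y)^G$, apply Kimura's nilpotence theorem there, and transport the resulting nilpotence back to $X$ via the relation $\Gamma_p\circ{}^t\Gamma_p=|G|\,\Delta_X$. The only cosmetic differences are your normalization by $1/|G|$ and your use of the idempotent identities $a\circ b=\Delta_X$, $b\circ a=\Delta^G_Y$ in place of the paper's appeal to Lieberman's lemma to see that the transported correspondence lies in $\mathrm{End}(h(Y)^G)$.
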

  
  \begin{proof} Let $p\colon Y\to X$ denote the quotient morphism.
     We associate to $\Gamma$ a correspondence $\Gamma_Y\in A^n(Y\times Y)_{\QQ}$ defined as
    \[ \Gamma_Y:= {}^t \Gamma_p\circ \Gamma\circ \Gamma_p\ \ \ \in A^n(Y\times Y)_{\QQ}\ .\]
  By Lieberman's lemma \cite[Lemma 3.3]{V3}, there is equality
   \[ \Gamma_Y =(p\times p)^\ast \Gamma\ \ \ \hbox{in}\ A^n(Y\times Y)_{\QQ}\ ,\]
   and so $\Gamma_Y$ is $G\times G$--invariant:
   \[ \Delta_Y^G\circ \Gamma_Y\circ \Delta_Y^G =\Gamma_Y\ \ \ \hbox{in}\ A^n(Y\times Y)_{\QQ}\ .\]
   This implies that 
     \[\Gamma_Y\in \Delta_Y^G\circ A^n(Y\times Y)_{\QQ}\circ\Delta_Y^G\ ,\] 
     and so
   \[ \Gamma_Y\in\hbox{End}_{\MM_{\rm rat}}\bigl(h(Y)^G\bigr)\ .\]
   Since clearly $\Gamma_Y$ is numerically trivial, and $h(Y)^G$ is finite--dimensional (by assumption), there exists $N\in\NN$ such that
    \[ (\Gamma_Y)^{\circ N} = {}^t \Gamma_p\circ \Gamma\circ\Gamma_p\circ{}^t \Gamma_p\circ \cdots \circ \Gamma_p=0\ \ \ \hbox{in}\ A^n(Y\times Y)_{\QQ}\ .\]
    Using the relation $\Gamma_p\circ{}^t \Gamma_p=d\Delta_X$, this boils down to
    \[ d^{N-1}\ \  {}^t \Gamma_p\circ \Gamma^{\circ N}\circ \Gamma_p=0\ \ \ \hbox{in}\ A^n(Y\times Y)_{\QQ}\ .\]
    From this, we deduce that also
    \[ \Gamma^{\circ N}= {1\over d^{N+1}} \Gamma_p\circ \Bigl( d^{N-1} \ \ {}^t \Gamma_p\circ \Gamma^{\circ N}\circ \Gamma_p\Bigr) \circ {}^t \Gamma_p=0\ \ \ \hbox{in}\ A^n(X\times X)_{\QQ}\ .\]    
     \end{proof}

\subsection{Enriques varieties}

\begin{definition}[\cite{BNS}] A smooth projective variety is called {\em Enriques variety\/} if the following hold:

\noindent
(\rom1) the holomorphic Euler characteristic $\chi(X,\OO_X)=1$;

\noindent
(\rom2)
there exists an integer $d\ge 2$ (called the {\em index\/} of $X$) such that the canonical bundle $K_X$ has order $d$ in the Picard group of $X$, and the fundamental group
$\pi_1(X)$ is cyclic of order $d$.
\end{definition}

\begin{definition}[\cite{BNS}] An Enriques variety is called {\em irreducible\/} if the holonomy representation of its universal cover is irreducible.
\end{definition}

\begin{theorem}[\cite{BNS}] Let $X$ be an irreducible Enriques variety of index $>2$. Then $X$ is the quotient of an irreducible symplectic holomorphic manifold by a group of automorphisms acting freely.
\end{theorem}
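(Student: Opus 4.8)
The plan is to pass to the universal cover of $X$ and invoke the Beauville--Bogomolov decomposition theorem. I would write $\pi\colon\wt X\to X$ for the universal covering; since $\pi_1(X)\cong\ZZ/d$ is finite, $\pi$ is a finite \'etale Galois covering of degree $d$ with Galois group $G\cong\ZZ/d$ acting freely by deck transformations, and $X=\wt X/G$. As $X$ is projective, so is $\wt X$, and $K_{\wt X}=\pi^\ast K_X$ because $\pi$ is \'etale. Next I would identify $\wt X$ with the canonical cyclic cover of $X$: a torsion line bundle of order exactly $d$ gives a connected \'etale $\ZZ/d$--cover $\mathrm{Spec}_X\bigl(\bigoplus_{i=0}^{d-1}K_X^{-i}\bigr)$ on which $K_X$ pulls back to a trivial bundle, and because $\pi_1(X)\cong\ZZ/d$ every connected $\ZZ/d$--cover of $X$ is the universal cover. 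Hence $K_{\wt X}\cong\OO_{\wt X}$.

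Thus $\wt X$ is a simply connected smooth projective variety with trivial canonical bundle. By Yau's theorem it admits a Ricci--flat K\"ahler metric, and the Beauville--Bogomolov decomposition theorem (via Berger's holonomy classification) yields a splitting $\wt X\cong\prod_i Y_i\times\prod_j Z_j$ into Calabi--Yau manifolds $Y_i$ and irreducible holomorphic symplectic manifolds $Z_j$, with no abelian-variety factor since $b_1(\wt X)=0$. The hypothesis that $X$ is \emph{irreducible} --- that the holonomy representation of $\wt X$ is irreducible --- forces a single factor, so $\wt X$ is either a Calabi--Yau manifold or an irreducible holomorphic symplectic manifold.

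To rule out the Calabi--Yau case I would use $d>2$. Holomorphic Euler characteristics are multiplicative along finite \'etale covers, so
\[ \chi(\OO_{\wt X})=\int_{\wt X}\mathrm{td}(T_{\wt X})=\int_{\wt X}\pi^\ast\,\mathrm{td}(T_X)=d\cdot\chi(\OO_X)=d\ .\]
But a Calabi--Yau $n$--fold $Y$ has $h^{p,0}(Y)=0$ for $0<p<n$ and $h^{n,0}(Y)=1$, whence $\chi(\OO_Y)=1+(-1)^n\in\{0,2\}$; since $d\ge 3$ this is impossible. Therefore $\wt X$ is an irreducible holomorphic symplectic manifold and $X=\wt X/G$ is the quotient of such a manifold by the free action of $G\cong\ZZ/d$, as claimed.

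The real content is imported: the Beauville--Bogomolov structure theorem, resting on Yau's solution of the Calabi conjecture and Berger's holonomy list. Granting that, the argument is bookkeeping with $\pi_1(X)$, the order of $K_X$ and $\chi(\OO_X)$; the one point that repays attention is the identification of the universal cover with the canonical cyclic cover, i.e. checking that $K_X$ is trivialized exactly on $\wt X$ and not on some intermediate \'etale quotient.
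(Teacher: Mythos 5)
Your argument is correct and is essentially the proof given in the cited reference \cite{BNS}: the paper itself only quotes this statement without reproducing a proof, and the argument there is exactly yours --- pass to the universal cover (which is the canonical cyclic cover, hence has trivial canonical bundle), apply the Beauville--Bogomolov decomposition, use irreducibility of the holonomy to get a single factor, and rule out the Calabi--Yau case via $\chi(\OO_{\wt X})=d\cdot\chi(\OO_X)=d>2$ against $\chi(\OO_Y)=1+(-1)^n$. The one point worth flagging --- that the order-$d$ torsion of $K_X$ together with $\pi_1(X)\cong\ZZ/d$ identifies the universal cover with the cyclic cover --- is handled correctly.
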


\begin{proposition}[\cite{BNS}]\label{ex} There exist irreducible Enriques varieties of dimension $4$ and index $3$, and of dimension $6$ and index $4$.
\end{proposition}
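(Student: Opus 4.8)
The plan is to produce both varieties explicitly, as free quotients of generalized Kummer varieties, so that they are precisely of the form to which theorem \ref{main} applies. Fix $d\in\{3,4\}$ and set $n:=d-1$, so $2n\in\{4,6\}$ and $n+1=d$. Let $E_d$ be the elliptic curve with complex multiplication by $\ZZ[\zeta_d]$ (so $j(E_3)=0$, $j(E_4)=1728$), let $E'$ be an arbitrary elliptic curve, put $A:=E'\times E_d$, and let $\varphi_0\in\hbox{Aut}(A)$ be the group automorphism $\varphi_0(x,y)=(x,\zeta_d\,y)$; it has order exactly $d$, fixes the origin, acts on the tangent space $T_0A$ with eigenvalues $1$ and $\zeta_d$ (hence with determinant $\zeta_d$), and $\sum_{j=0}^{d-1}\varphi_0^{\,j}$ is multiplication by $d$ on the first factor and $0$ on the second. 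Pick a torsion point $b=(b_1,b_2)\in A[n+1]$ and set $\psi:=t_b\circ\varphi_0$, where $t_b$ denotes translation by $b$. From $\psi^{\,j}=t_{(1+\varphi_0+\cdots+\varphi_0^{\,j-1})(b)}\circ\varphi_0^{\,j}$ and $d\,b_1=0$ one gets $\psi^{\,d}=\hbox{id}$, and $\psi$ has order exactly $d$. Since $(n+1)b=0$, the summation morphism $A^{[n+1]}\to A$ intertwines $\psi^{[n+1]}$ with $\varphi_0$, which fixes the origin; hence $\psi^{[n+1]}$ preserves the Albanese fibre $K:=K_n(A)$, a generalized Kummer variety of dimension $2n$. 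Write $\bar\psi$ for the induced automorphism of $K$, set $G:=\langle\bar\psi\rangle\cong\ZZ/d$, and $X:=K/G$.

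Granting that $G$ acts freely on $K$ (see below), the defining properties of an irreducible Enriques variety follow. A generalized Kummer variety is irreducible holomorphic symplectic and simply connected, so $K\to X$ is the universal cover; hence $\pi_1(X)\cong\ZZ/d$ is cyclic of order $d$, the holonomy representation of the universal cover is irreducible, and $\chi(X,\OO_X)=\tfrac1d\,\chi(K,\OO_K)=\tfrac{n+1}{d}=1$, using $\chi(K_n(A),\OO)=n+1$. For the canonical bundle: translations act trivially on holomorphic forms, so $\bar\psi$ acts on $H^{2,0}(K)\cong H^{2,0}(A)=\C\,(dx\wedge dy)$ by the determinant $\zeta_d$ of the linearization of $\varphi_0$, hence on the generator $\sigma^{\wedge n}$ of the canonical bundle of $K$ by $\zeta_d^{\,n}=\zeta_d^{\,d-1}$, a primitive $d$-th root of unity; since the pullback of $K_X$ to $K$ is trivial, $K_X$ corresponds to the character $\bar\psi\mapsto\zeta_d^{\,d-1}$ of $\pi_1(X)$, of order $d$, so $K_X$ has order exactly $d$ in $\pic(X)$. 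Thus $X$ is an irreducible Enriques variety of index $d$ and dimension $2n$, and $d=3,4$ give the two asserted examples. (These are exactly the varieties covered by theorem \ref{main}, which is the point of stating proposition \ref{ex} here.)

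The one genuinely non-formal point is to choose $b$ so that $G$ acts \emph{freely} on $K=K_n(A)$. As $G\cong\ZZ/d$ with $d\in\{3,4\}$, it is enough that $\bar\psi^{\,d/p}$ be fixed-point free for each prime $p\mid d$, i.e. that $\bar\psi$ be free for $d=3$ and $\bar\psi^{\,2}$ be free for $d=4$. Choosing $b_1\in E'[d]$ of order exactly $d$ makes the relevant power $\psi^{\,k}$ ($k=d/p$) fixed-point free already on $A$, its first coordinate being the free translation $x\mapsto x+k\,b_1$. For such $\psi^{\,k}$, a fixed point of $\bar\psi^{\,k}$ on $K$ is a $\langle\psi^{\,k}\rangle$-invariant length-$(n+1)$ subscheme $\xi\subset A$ of vanishing summation whose support is a union of \emph{free} $\langle\psi^{\,k}\rangle$-orbits; since $n+1\le4$ the possibilities are very limited (for $d=3$: a single free $3$-orbit; for $d=4$: two free $2$-orbits, or one free $2$-orbit with multiplicity $2$), and in each case — computing via $\psi^{\,j}=t_{(1+\varphi_0+\cdots+\varphi_0^{\,j-1})(b)}\circ\varphi_0^{\,j}$, and observing that the $E'$-component of the sum can always be killed by varying $\xi$ while the $E_d$-component depends only on $b_2$ — the condition $\Sigma(\xi)=0$ reduces to $\ell(b_2)=0$ for an explicit non-zero $\ell\in\ZZ[\zeta_d]\subset\hbox{End}(E_d)$ (one finds $\ell=\zeta_3-1$ when $d=3$ and $\ell=2(1+\zeta_4)$ when $d=4$). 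Each such $\ell$ has proper kernel on the finite group $E_d[n+1]$, so choosing $b_2$ outside these (finitely many) kernels — together with $b_1$ of order $d$ — makes $G$ act freely, completing the construction. I expect essentially all the effort to be in this finite case analysis, in particular in handling the non-reduced configurations supported on free orbits; the hypothesis $\dim X\le6$, i.e. $n+1\le4$, is exactly what keeps it finite and elementary.
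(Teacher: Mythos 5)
Your construction is correct and is precisely the one of Boissi\`ere--Nieper-Wi{\ss}kirchen--Sarti that the paper's proof simply cites and sketches (a product of elliptic curves, an order-$d$ group automorphism twisted by a suitably chosen torsion translation, and the free quotient of the resulting generalized Kummer variety); your fixed-point analysis, including the explicit elements $\ell=\zeta_3-1$ (an associate of $2+\zeta_3$) and $\ell=2(1+\zeta_4)$ and the properness of their kernels in $E_d[d]$, checks out. The only caveat is notational: you index $K_n(A)$ by half its dimension (Beauville's convention, length-$(n+1)$ subschemes), whereas the paper's $K_n(A)$ parametrizes length-$n$ subschemes and has dimension $2n-2$, so your two examples are the paper's $K_3(A)$ and $K_4(A)$.
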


\begin{proof} This is \cite[Proposition 4.1]{BNS}, the idea of which is as follows. Let $A$ be the product of 2 elliptic curves, and let $\phi$ be a finite order automorphism of $A$. Consider the generalized Kummer variety $K=K_n(A)$ for $n=3,4$. For an appropriate choice of $\phi$, the induced automorphism $\psi\in\hbox{Aut}(K)$ is such that the action on $K$ is free, and the quotient
  \[ X=K/<\psi>\ \]
  is an Enriques variety.
\end{proof}

\begin{remark} To the best of my knowledge, there are as yet no examples of Enriques varieties with index $>4$.
\end{remark}

\begin{remark} In \cite{OS}, there is a definition of ``Enriques manifold'' which is a priori slightly different from the definition of Enriques variety. (In \cite[Remark 1.3(a)]{JKim}, it is explained there might potentially exist Enriques varieties that are {\em not\/} Enriques manifolds.)
However, the examples given in proposition \ref{ex} are also Enriques manifolds (and actually, these examples are also to be found in \cite{OS}).
\end{remark}

\subsection{Generalized Kummer varieties}

\begin{definition} Let $A$ be an abelian surface. For any $n\in\NN$, let 
  \[    \pi\colon A^{[n]}\ \to\ A^{(n)}\]
  denote the Hilbert--Chow morphism from the Hilbert scheme $A^{[n]}$ to the symmetric product $A^{(n)}$. Let $\sigma\colon A^{(n)}\to A$ denote the addition morphism. Consider the composition
  \[ s\colon A^{[n]}\ \xrightarrow{\pi}\ A^{(n)}\ \xrightarrow{\sigma}\ A\ .\]
  The generalized Kummer variety is defined as the fibre
  \[  K_n(A):=s^{-1}(0)\ .\]
$K_n(A)$ is a hyperk\"ahler variety of dimension $2n-2$.
\end{definition}

\begin{definition}[\cite{BNS}]\label{nat} An automorphism $\psi\in\hbox{Aut}(K_n(A))$ is {\em natural\/} if $\psi$ is induced by an automorphism of $A$. More precisely, let $A[n]$ denote the $n$--torsion points of $A$, and let $\hbox{Aut}_{\ZZ}(A)$ denote the group automorphisms of $A$. As explained in \cite[Section 3.1]{BNS}, there is a well--defined homomorphism
  \[ A[n]\rtimes \hbox{Aut}_\ZZ(A)\ \to\ \hbox{Aut}(K_n(A))\ .\]
  The group of natural automorphisms of $K_n(A)$ is defined as the image of this homomorphism.
\end{definition}

\begin{theorem}[Boissi\`ere--Nieper-Wi{\ss}kirchen--Sarti \cite{BNS}] Let $n\ge 3$. Let $E$ denote the exceptional divisor of the birational morphism (obtained from $\pi$ by restriction)
  \[ \pi\vert_{K_n(A)}\colon\ \ K_n(A)\ \to\ K^{(n)}:=\sigma^{-1}(0)\ .\]
  An automorphism $\psi\in\hbox{Aut}(K_n(A))$ is natural if and only if $\psi(E)=E$.
  \end{theorem}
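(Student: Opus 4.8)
For the direction ``natural $\Rightarrow\psi(E)=E$'' there is little to do: a natural automorphism is, by definition, the restriction to $K_n(A)$ of the automorphism of $A^{[n]}$ induced by some $(t,\phi)\in A[n]\rtimes\hbox{Aut}_\ZZ(A)$ acting on $A$. Via the Hilbert--Chow morphism this automorphism of $A^{[n]}$ is compatible with the automorphism of $A^{(n)}$ induced by $(t,\phi)$, which preserves $\sigma^{-1}(0)$ because $nt=0$ and $\phi$ is a group homomorphism; it therefore preserves the exceptional locus of $\pi$, and hence its restriction to $K_n(A)$, which is $E$.

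For the converse, assume $\psi(E)=E$; the plan is to pass to the abelian variety covering $K^{(n)}$ away from its singularities. Set $B:=\ker(\sigma\colon A^n\to A)$, an abelian variety of dimension $2(n-1)$ (abstractly $A^{n-1}$) with $H_1(B,\ZZ)=H_1(A,\ZZ)\otimes\Lambda$, where $\Lambda:=\ker(\ZZ^n\to\ZZ)$ is the standard $\ZZ[S_n]$--lattice; $S_n$ permutes the factors of $A^n$, hence acts faithfully on $B$, and $B/S_n=K^{(n)}$. Let $q\colon B\to K^{(n)}$ be the quotient and let $\Delta\subset B$ be the union of the pairwise diagonals $\{x_i=x_j\}$, which has pure codimension $2$. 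Then $S_n$ acts freely on $B\setminus\Delta$, and the stabiliser of a point of $\Delta$ acts on the tangent space with no non-trivial pseudo--reflections (a transposition has a two--dimensional $(-1)$--eigenspace there), so Chevalley--Shephard--Todd gives $\hbox{Sing}(K^{(n)})=q(\Delta)$. Consequently $\pi\vert_{K_n(A)}$ restricts to an isomorphism between $K_n(A)\setminus E$ and $(B\setminus\Delta)/S_n$; let $\psi^\circ$ be the automorphism of $(B\setminus\Delta)/S_n$ induced by $\psi$.

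The main step is to lift $\psi^\circ$ to $B$. Since $\Delta$ has codimension $2$ and $B$ is smooth, $\pi_1(B\setminus\Delta)=\pi_1(B)\cong\ZZ^{4(n-1)}$, so $\Pi:=\pi_1\bigl((B\setminus\Delta)/S_n\bigr)$ is an extension of $S_n$ by this lattice on which $S_n$ acts faithfully, i.e.\ $\Pi$ is a crystallographic group. Its translation lattice --- which is exactly the subgroup of $\Pi$ corresponding to the connected $S_n$--cover $B\setminus\Delta$, and which can be recovered intrinsically (e.g.\ as the set of elements with finite conjugacy class) --- is therefore characteristic in $\Pi$. Hence $\psi^\circ$ preserves that cover, i.e.\ it lifts to $\hat\psi^\circ\in\hbox{Aut}(B\setminus\Delta)$ with $q\circ\hat\psi^\circ=\psi^\circ\circ q$. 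As a rational map from the smooth variety $B$ to the abelian variety $B$, defined off codimension $2$, $\hat\psi^\circ$ extends to a morphism $\hat\psi\colon B\to B$; applying the same to $(\psi^\circ)^{-1}$ shows $\hat\psi\in\hbox{Aut}(B)$. Finally $q\circ\hat\psi=\psi^\circ\circ q$ on the dense open $B\setminus\Delta$, hence everywhere, which forces $q\circ(\hat\psi g\hat\psi^{-1})=q$ for every $g\in S_n$; by connectedness of $B\setminus\Delta$ the automorphism $\hat\psi g\hat\psi^{-1}$ then lies in $S_n$, so $\hat\psi$ normalises $S_n\subset\hbox{Aut}(B)$.

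It remains to identify the normaliser $N:=N_{\hbox{Aut}(B)}(S_n)$. Writing $\hbox{Aut}(B)=B\rtimes\hbox{Aut}_\ZZ(B)$ (translations together with the origin--preserving automorphisms), an element $t_\tau\circ\alpha$ normalises $S_n$ iff $\alpha$ normalises $S_n$ in $\hbox{Aut}_\ZZ(B)$ and $\tau\in B^{S_n}$; now $B^{S_n}$ is the diagonal copy of $A[n]$, and since $\hbox{End}_{\ZZ[S_n]}(\Lambda)=\ZZ$ one gets $\hbox{End}_{S_n}(B)=\hbox{End}(A)$, so the centraliser of $S_n$ in $\hbox{Aut}_\ZZ(B)$ is the diagonal $\hbox{Aut}_\ZZ(A)$; as $\hbox{Out}(S_n)$ contributes nothing (it is trivial for $n\neq 6$, and for $n=6$ the outer automorphism does not preserve the isomorphism class of $\Lambda\otimes\QQ$), one obtains $N=A[n]^{\hbox{diag}}\rtimes\bigl(S_n\times\hbox{Aut}_\ZZ(A)\bigr)$. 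Thus $\hat\psi=t_s\circ\gamma\circ\phi$ with $s\in A[n]^{\hbox{diag}}$, $\gamma\in S_n$ and $\phi\in\hbox{Aut}_\ZZ(A)$ acting diagonally; since $q\circ\gamma=q$, this exhibits $\psi^\circ$ as the automorphism of $K_n(A)\setminus E$ induced by $(s,\phi)\in A[n]\rtimes\hbox{Aut}_\ZZ(A)$. The natural automorphism of $K_n(A)$ attached to $(s,\phi)$ restricts to the same automorphism of the dense open $K_n(A)\setminus E$, hence coincides with $\psi$; so $\psi$ is natural. The one genuinely delicate point is the lifting in the third paragraph: it rests on the two structural facts that removing the codimension--$2$ locus $\Delta$ does not change $\pi_1$ of the torus $B$, and that the resulting group $\Pi$ is crystallographic so that its translation lattice is characteristic; everything else is formal, apart from the modest amount of integral representation theory of $S_n$ entering the computation of $N$.
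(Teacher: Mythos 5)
The paper does not actually prove this statement --- it is imported from \cite{BNS}, the ``proof'' being the single line ``This is [BNS, Theorem 3.1]'' --- so there is no internal argument to compare against. Your proof is correct and follows the same strategy as the cited source: descend $\psi$ along $\pi\vert_{K_n(A)}$ to the singular model $K^{(n)}=(B\setminus\Delta)/S_n$ with $B=\ker(A^n\to A)$, lift it to $B$ using that the translation lattice of the crystallographic group $\pi_1\bigl((B\setminus\Delta)/S_n\bigr)$ is characteristic (this is indeed the one genuinely delicate step, and your Bieberbach-type justification via finite conjugacy classes, resting on the faithfulness of the $S_n$-action on $H_1(A,\ZZ)\otimes\Lambda$, is sound), and finally identify the normaliser of $S_n$ in $\hbox{Aut}(B)$ as $A[n]^{\rm diag}\rtimes(S_n\times\hbox{Aut}_{\ZZ}(A))$.
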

  
  \begin{proof} This is  \cite[Theorem 3.1]{BNS}.  
   \end{proof}

\subsection{Motive of a generalized Kummer variety}
\label{mot}

\begin{notation} For $n\in\NN$, let $P(n)$ be the set of partitions of $n$. A partition $\lambda\in P(n)$ can be written
  \[ \lambda= (\lambda_1,\ldots,\lambda_{\ell_\lambda})=1^{a_1}2^{a_2}\cdots r^{a_r}\ ,\]
  where $\ell_\lambda$ is the length of $\lambda$.
  We define $e(\lambda):=\hbox{gcd}\{\lambda_1,\ldots,\lambda_{\ell_\lambda}\}$.
  
  For any $\lambda\in P(n)$, we write
  \[ A^{(\lambda)}:= A^{(a_1)}\times A^{(a_2)}\times\cdots\times A^{(a_r)}\ .\] 
\end{notation}

\begin{definition} A homomorphism of Chow motives $\Gamma\colon M\to N$ is {\em split\/} if $\Gamma$ admits a left inverse.
\end{definition}

\begin{theorem}[Xu \cite{Xu}]\label{xu} Let $K=K_n(A)$ be a generalized Kummer variety. There is a split homomorphism of Chow motives
  \[ \Gamma\colon\ \ h(K)\ \to\  \bigoplus_{\lambda\in P(n)} \bigoplus_{\tau\in A[e(\lambda)]} h(A^{(\lambda)})(n-\ell_\lambda-2)\ \ \ \hbox{in}\ \MM_{\rm rat}\ .\]
  In particular, $K$ has finite--dimensional motive, in the sense of \cite{Kim} (and even: $K$ has motive of abelian type, in the sense of \cite{V3}).
\end{theorem}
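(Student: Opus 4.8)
The plan is to concentrate on constructing the split homomorphism $\Gamma$, since the ``in particular'' clause then follows formally. Each $A^{(\lambda)}=A^{\ell_\lambda}/(\mathfrak S_{a_1}\times\cdots\times\mathfrak S_{a_r})$ is a projective quotient variety with $h(A^{(\lambda)})$ a direct summand of $h(A^{\ell_\lambda})=h(A)^{\otimes\ell_\lambda}$; an abelian surface is isogenous to a Jacobian or to a product of elliptic curves, so $h(A)$, and hence $h(A^{\ell_\lambda})$, has finite--dimensional motive (Kimura \cite{Kim}) lying in the tensor subcategory generated by motives of curves, i.e. of abelian type in the sense of \cite{V3}; and both that subcategory and the property of finite--dimensionality are stable under direct sums, Tate twists and passage to direct summands. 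So from a split $\Gamma$, the motive $h(K)$ is a direct summand of a finite sum of Tate twists of motives of abelian varieties, which gives the conclusion.

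To build $\Gamma$ I would follow de Cataldo and Migliorini's analysis of $h(S^{[n]})$ for a surface $S$, specialised to the fibre at hand. The Hilbert--Chow morphism restricts to a resolution of singularities
\[ \rho\colon\ K=K_n(A)\ \longrightarrow\ K^{(n)}:=\sigma^{-1}(0)\subset A^{(n)}\ . \]
Stratify $K^{(n)}$ by partition type: for $\lambda\in P(n)$, let $K^{(n)}_\lambda$ be the locus of $0$--cycles of type $\lambda$ (with $\ell_\lambda$ distinct points carrying multiplicities $\lambda_1,\dots,\lambda_{\ell_\lambda}$) that sum to $0$. Then $K^{(n)}_\lambda$ has codimension $2(n-\ell_\lambda)$ in $K^{(n)}$, while the fibre of $\rho$ over a point of it is a product of punctual Hilbert schemes of total dimension $n-\ell_\lambda$; so $\rho$ is semismall and every stratum is relevant. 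Writing $\widetilde\sigma_\lambda\colon A^{(\lambda)}\to A$ for the weighted sum $\sum_i\lambda_i(\cdot)$, the closure $\overline{K^{(n)}_\lambda}$ is the image in $A^{(n)}$ of $\widetilde\sigma_\lambda^{-1}(0)$; and because $\widetilde\sigma_\lambda$ factors through $[e(\lambda)]\colon A\to A$, the fibre $\widetilde\sigma_\lambda^{-1}(0)$ has connected components in bijection with $A[e(\lambda)]$ — the origin of the two indices in the target of $\Gamma$.

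For each pair $(\lambda,\tau)$ one forms the incidence correspondence $Z_{\lambda,\tau}\subset K\times A^{(\lambda)}$: the closure of the locus of $(\xi,x)$ with $\rho(\xi)$ the weighted $0$--cycle attached to $x$ and with $x$ in the $\tau$--component of the $\lambda$--stratum. After the appropriate normalisation and Tate twist, $[Z_{\lambda,\tau}]$ and its transpose define morphisms $h(K)\to h(A^{(\lambda)})(n-\ell_\lambda-2)$ and $h(A^{(\lambda)})(n-\ell_\lambda-2)\to h(K)$; following de Cataldo--Migliorini one checks that these are, after normalisation, mutually orthogonal and that together they exhibit $h(K)$ as a direct summand of $\bigoplus_{\lambda\in P(n)}\bigoplus_{\tau\in A[e(\lambda)]}h(A^{(\lambda)})(n-\ell_\lambda-2)$ — which, via the projection formula and the explicit geometry of punctual Hilbert schemes, comes down to intersection computations on the strata. (An alternative, to avoid redoing de Cataldo--Migliorini, would be to start from the known motivic decomposition of $h(A^{[n]})$ and descend along the sum morphism $s\colon A^{[n]}\to A$, using that the translation action of $A$ presents $A^{[n]}$ as the quotient of $A\times K$ by a diagonal $A[n]$--action; restricting to the fibre over $0$ replaces each $h(A^{(\lambda)})$--summand by $\bigoplus_{\tau\in A[e(\lambda)]}h(A^{(\lambda)})$, with a Tate shift by $\dim A=2$. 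This is not obviously lighter, since $A[n]$ does not act trivially on $h(A)$.)

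I expect the genuine obstacle to be the orthogonality and completeness just mentioned: transcribing de Cataldo--Migliorini is routine over the open stratum $\lambda=1^n$, but for the deeper strata one has to control how the vanishing--sum condition interacts with the fibres of $\rho$ and — the feature with no counterpart in the plain Hilbert--scheme case — keep systematic track of the torsion components indexed by $A[e(\lambda)]$, which are exactly what shifts the ``expected'' twist $n-\ell_\lambda$ to the twist $n-\ell_\lambda-2$ occurring in the statement. Granted the construction of $\Gamma$, finite--dimensionality of abelian type is immediate by the first paragraph.
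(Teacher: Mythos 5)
Your overall architecture (semismallness of $\rho\colon K\to K^{(n)}$, stratification by partitions, components of $\widetilde\sigma_\lambda^{-1}(0)$ indexed by $A[e(\lambda)]$, de Cataldo--Migliorini correspondences) is the right skeleton for an actual proof of Xu's theorem, and your first paragraph correctly reduces the ``in particular'' clause to the existence of $\Gamma$. Note, however, that the paper does not reprove Xu at all: its proof is two lines, citing \cite[Corollary 2.8]{Xu} for the isomorphism $h(A\times K)\cong\bigoplus_{\lambda,\tau}h(A^{(\lambda)})(n-\ell_\lambda)$ and then splitting $h(K)$ off $h(A\times K)(-2)$ via $\mathrm{pr}_2$ and intersection with $x\times K$. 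So you are undertaking a genuinely larger task than the paper does, and your parenthetical ``alternative'' (descending from $h(A^{[n]})$ via $A^{[n]}=(A\times K)/A[n]$) is in fact the closest in spirit to what Xu and the paper actually do.

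There is one concrete error in your sketch: the claim in the last paragraph that the torsion components indexed by $A[e(\lambda)]$ ``are exactly what shifts the expected twist $n-\ell_\lambda$ to $n-\ell_\lambda-2$.'' They are not; they only produce the inner direct sum over $\tau$. The shift by $-2=-\dim A$ has a different source, and your incidence correspondences as defined do not yet account for it. The $(\lambda,\tau)$-stratum closure of $K^{(n)}$ is dominated not by $A^{(\lambda)}$ (dimension $2\ell_\lambda$) but by $K^{(\lambda)}_\tau:=\{\sum_i(\lambda_i/e(\lambda))x_i=\tau\}/S_\lambda$ (dimension $2\ell_\lambda-2$), and your $Z_{\lambda,\tau}$ is supported on $K\times K^{(\lambda)}_\tau$; the de Cataldo--Migliorini machinery therefore naturally yields $h(K)\cong\bigoplus_{\lambda,\tau}h(K^{(\lambda)}_\tau)(n-\ell_\lambda)$, with the twist dictated by the semismall codimension count, not a statement about $h(A^{(\lambda)})$. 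Running the same correspondences inside the larger ambient space $K\times A^{(\lambda)}$ wrecks the codimension bookkeeping (compositions are no longer proper in the expected dimension), so ``after the appropriate normalisation and Tate twist'' cannot be made to work as stated. The missing step is the translation isomorphism $h(A\times K^{(\lambda)}_\tau)\cong h(A^{(\lambda)})$, $(x,z)\mapsto t_x(z)$ (this is \cite[Corollary 2.8]{Xu}, reproduced as isomorphism (\ref{iso2}) in the proof of proposition \ref{equiv}); splitting the $A$-factor off the left-hand side is what costs the extra Tate twist by $\dim A=2$ and turns $h(K^{(\lambda)}_\tau)(n-\ell_\lambda)$ into a direct summand of $h(A^{(\lambda)})(n-\ell_\lambda-2)$. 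With that step inserted your plan matches Xu's argument; without it the stated target of $\Gamma$ is not reached and the exponent $n-\ell_\lambda-2$ is unexplained.
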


\begin{proof} This follows from \cite[Corollary 2.8]{Xu}, which states more precisely that there is an isomorphism
  \[  h(A\times K)\ \cong\ \bigoplus_{\lambda\in P(n)} \bigoplus_{\tau\in A[e(\lambda)]} h(A^{(\lambda)})(n-\ell_\lambda)\ \ \ \hbox{in}\ \MM_{\rm rat}\ .\]
Theorem \ref{xu} is obtained by composing with the split homomorphism
\[ h(K)\ \to\ h(A\times K)\ \to\ h(A\times K)(-2)  \ ,   \]
where the first arrow is given by projection on the second summand, and the second arrow is given by intersecting with $x\times K$ where $x\in A$.
\end{proof}

\begin{remark} The fact that generalized Kummer varieties have finite--dimensional motive of abelian type (which was first stated explicitly in \cite{Xu}) seems to have been folklore knowledge for quite some time. Indeed, as noted in \cite[Remark 7.10 and \S 6.1]{FTV}, this fact follows readily from the results of de Cataldo--Migliorini \cite{CM}.

We mention in passing that L. Fu, in the course of proving the Beauville--Voisin conjecture for generalized Kummer varieties, had previously developed a motivic decomposition for $K_n(A)$ \cite{LFu}. Parts of the argument of Xu \cite{Xu} can already be found in \cite{LFu}. 
\end{remark}

Things simplify if one is only interested in zero--cycles:

\begin{corollary}\label{corxu} Let $K=K_n(A)$ be a generalized Kummer variety. There is a split injection
  \[ \Gamma_\ast\colon\ \ A_0(K)_{\QQ}\ \to\ A_0(A^{(n)})_{\QQ}\ .\]
  \end{corollary}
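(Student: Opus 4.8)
The plan is to obtain the corollary directly from Theorem \ref{xu} by applying the functor ``group of $0$--cycles'' to the split homomorphism $\Gamma$, and by checking that, for dimension reasons, exactly one summand on the target side survives.

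First I would recall that a split homomorphism of Chow motives induces a split injection on Chow groups in each degree: if $\Gamma\colon h(K)\to N$ admits a left inverse $\Psi$ in $\MM_{\rm rat}$, then $\Psi\circ\Gamma=\hbox{id}_{h(K)}$ gives $\Psi_\ast\circ\Gamma_\ast=\hbox{id}$ on $\mathrm{CH}^\ast(-)_\QQ$, so $\Gamma_\ast$ is injective (that the formalism of correspondences, and in particular this functoriality, applies to the projective quotient varieties $A^{(\lambda)}$ is guaranteed by Proposition \ref{quot}). Applying $\mathrm{CH}^{2n-2}(-)_\QQ$ --- which on $h(K)$ equals $A_0(K)_\QQ$ because $\dim K=2n-2$ --- to the split homomorphism of Theorem \ref{xu} yields a split injection
  \[ \Gamma_\ast\colon\ \ A_0(K)_\QQ\ \hookrightarrow\ \bigoplus_{\lambda\in P(n)}\ \bigoplus_{\tau\in A[e(\lambda)]}\ \mathrm{CH}^{2n-2}\bigl(h(A^{(\lambda)})(n-\ell_\lambda-2)\bigr)_\QQ\ .\]

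Next I would unwind the Tate twists. Since, by the definition of the Tate twist, $\mathrm{CH}^{j}\bigl(h(Y)(m)\bigr)_\QQ=\mathrm{CH}^{j-m}(Y)_\QQ$, the $(\lambda,\tau)$--summand equals $\mathrm{CH}^{(2n-2)-(n-\ell_\lambda-2)}(A^{(\lambda)})_\QQ=\mathrm{CH}^{\,n+\ell_\lambda}(A^{(\lambda)})_\QQ$. Now $\dim A^{(\lambda)}=2\ell_\lambda$, and by Proposition \ref{quot} the operational Chow cohomology of a quotient variety vanishes above its dimension; hence this group is $0$ unless $n+\ell_\lambda\le 2\ell_\lambda$, i.e. unless $\ell_\lambda\ge n$. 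Since $\ell_\lambda\le n$ for every $\lambda\in P(n)$, the only surviving partition is $\lambda=1^n$, for which $A^{(\lambda)}=A^{(n)}$, $e(\lambda)=\gcd(1,\dots,1)=1$ (so the inner sum runs over the single element of $A[1]=\{0\}$), and $\mathrm{CH}^{\,n+\ell_\lambda}(A^{(\lambda)})_\QQ=\mathrm{CH}^{2n}(A^{(n)})_\QQ=A_0(A^{(n)})_\QQ$. The displayed split injection therefore collapses to $\Gamma_\ast\colon A_0(K)_\QQ\hookrightarrow A_0(A^{(n)})_\QQ$, which is the assertion.

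I do not anticipate a genuine obstacle: once Theorem \ref{xu} is granted, this is essentially a two--line bookkeeping argument. The single point that needs attention is keeping the Tate--twist convention of Theorem \ref{xu} straight, so that the degree relevant for $0$--cycles on $K$ translates into codimension $n+\ell_\lambda$ on $A^{(\lambda)}$ --- in particular codimension $2n$, i.e. $0$--cycles, on $A^{(n)}$ --- after which the vanishing of all the non--principal summands is an immediate dimension count.
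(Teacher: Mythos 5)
Your proposal is correct and follows exactly the paper's (much terser) argument: apply Theorem \ref{xu} to $0$--cycles and observe that all summands with $\ell_\lambda<n$ vanish for dimension reasons, leaving only $\lambda=1^n$ with $A^{(\lambda)}=A^{(n)}$. Your Tate--twist bookkeeping (landing in $\mathrm{CH}^{n+\ell_\lambda}(A^{(\lambda)})_\QQ$, which vanishes unless $\ell_\lambda\ge n$) is the correct way to make the paper's one--line justification precise.
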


\begin{proof} This is a consequence of theorem \ref{xu}; all summands with $\ell_\lambda<n$ vanish for dimension reasons.
\end{proof}

\begin{theorem}[Lin \cite{Lin}]\label{lin} Let $K=K_n(A)$ be a generalized Kummer variety. There exists a Chow--K\"unneth decomposition for $K$, i.e. a set of mutually orthogonal idempotents $\Pi^K_0,\ldots,\Pi^K_{4n-4}$ in $A^{2n-2}(K\times K)_{\QQ}$ lifting the K\"unneth components. 
Moreover, this decomposition satisfies
  \[  \bigl( \Pi^K_0+\Pi^K_2+\Pi^K_4+\cdots +\Pi^K_{4n-4}\bigr){}_\ast =\hbox{id}\colon\ \ A_0(K)_{\QQ}\ \to\ A_0(K)_{\QQ}\ .\]
  \end{theorem}
  
  \begin{proof} This is essentially \cite[Proposition 4.5]{Lin}. It follows from theorem \ref{xu} that $K$ is motivated by $A$ (in the sense of Arapura \cite{A}). Thus, \cite[Lemma 4.2]{A} implies $K$ verifies the standard Lefschetz conjecture, and so in particular the K\"unneth components of $K$ are algebraic. Finite--dimensionality then gives a Chow--K\"unneth decomposition \cite[Lemma 5.4]{J}. As for the last statement, this follows from the fact that the Beauville filtration on Chow groups of abelian varieties induces a decomposition
    \[  A_0(K)_{\QQ}=\bigoplus_{j=0}^{2n-2} A_0^{(j)}(K)_{\QQ}\ \]
    such that
    \[  A_0^{(j)}(K)_{\QQ}=\begin{cases} (\Pi^K_{4n-4-j})_\ast A_0(K)_{\QQ} &\hbox{if}\ j\ {is\ even};\\
               0 &\hbox{if}\ j\ {is\ odd}\ \\
               \end{cases}\]
   \cite[Theorem 1.4]{Lin}.   
  \end{proof}

Using the existence of a Chow--K\"unneth decomposition, corollary \ref{corxu} can be made more precise:

\begin{corollary}\label{corxu2} Let $K=K_n(A)$ be a generalized Kummer variety. Let $\Pi_j^K$ (resp. $\Pi_j^{A^{(n)}}$) be any Chow--K\"unneth decomposition of $K$ (resp. of $A^{(n)}$). For any $j$, there are split injections
  \[ \Gamma_\ast\colon\ \ 
       (\Pi^K_j)_\ast A_0(K)_{\QQ}\ \to\ (\Pi_{j+4}^{A^{(n)}})_\ast A_0(A^{(n)})_{\QQ}\ .\]
  \end{corollary}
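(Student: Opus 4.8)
The plan is to extract from theorem~\ref{xu} a morphism of Chow motives realising the split injection of corollary~\ref{corxu}, and then to follow how it interacts with arbitrary Chow--K\"unneth decompositions. As noted in the proof of corollary~\ref{corxu}, for $A_0$ only the summand indexed by $\lambda=1^n$ survives; since $e(1^n)=1$ we have $A[e(1^n)]=\{0\}$, and the associated Tate twist is $n-\ell_\lambda-2=-2$. Thus theorem~\ref{xu} provides a morphism $\gamma\colon h(K)\to h(A^{(n)})(-2)$ in $\MM_{\mathrm{rat}}$ inducing $\Gamma_\ast$ on $A_0$, together with the $\lambda=1^n$ component $\psi\colon h(A^{(n)})(-2)\to h(K)$ of a left inverse, so that $\psi_\ast\circ\gamma_\ast=\mathrm{id}$ on $A_0(K)_\QQ$.

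First I would record the homological picture. Since $h(K)$ and $h(A^{(n)})$ are of abelian type (theorem~\ref{xu}), the relevant full subcategory of $\MM_{\mathrm{hom}}$ is semisimple and graded by weight, with no morphisms between objects of distinct weight. As the twist $(-2)$ shifts the weight of $h^{k}(A^{(n)})$ to $k-4$, the realisation $\bar\gamma$ decomposes as a sum of maps $\bar\gamma_j\colon\bar h^{j}(K)\to\bar h^{j+4}(A^{(n)})(-2)$, the shift by $4$ corresponding to the twist $(-2)$, i.e.\ to intersecting with the codimension-$2$ class $x\times K$ in Xu's construction; similarly $\bar\psi$ is a sum of maps $\bar\psi_j\colon\bar h^{j+4}(A^{(n)})(-2)\to\bar h^{j}(K)$. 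Hence, for any Chow--K\"unneth decompositions $\{\Pi^K_j\}$ of $K$ and $\{\Pi^{A^{(n)}}_k\}$ of $A^{(n)}$, the correspondences
\[\gamma_{j\to k}:=\Pi^{A^{(n)}}_k\circ\gamma\circ\Pi^K_j\qquad\text{and}\qquad \psi_{k\to j}:=\Pi^K_j\circ\psi\circ\Pi^{A^{(n)}}_k\]
are numerically trivial whenever $k\neq j+4$.

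Next I would invoke finite--dimensionality to descend to Chow groups. As $K$, $A^{(n)}$ and $K\sqcup A^{(n)}$ have finite--dimensional motive (theorem~\ref{xu}), the nilpotence theorem~\ref{nilp} applies to their numerically trivial self--correspondences. The key point is that $\gamma_\ast$ maps $(\Pi^K_j)_\ast A_0(K)_\QQ$ into $(\Pi^{A^{(n)}}_{j+4})_\ast A_0(A^{(n)})_\QQ$, i.e.\ that every $\gamma_{j\to k}$ with $k\neq j+4$ acts as zero on $(\Pi^K_j)_\ast A_0(K)_\QQ$: one shows this using that the decreasing filtration of $A_0$ induced by a Chow--K\"unneth decomposition has finite length, that a numerically trivial correspondence lowers the level in it, and that, after composing with $\psi$ and applying the nilpotence theorem, a sufficiently high iterate vanishes already as a correspondence. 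Symmetrically, $\psi_\ast$ sends $(\Pi^{A^{(n)}}_{j+4})_\ast A_0(A^{(n)})_\QQ$ into $(\Pi^K_j)_\ast A_0(K)_\QQ$. Restricting the identity $\psi_\ast\circ\gamma_\ast=\mathrm{id}$ on $A_0(K)_\QQ$ to the $j$--th graded piece then yields the desired split injection, with left inverse the restriction of $\psi_\ast$.

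The step I expect to be the main obstacle is exactly this last descent: homologically --- even algebraically --- trivial correspondences need not act trivially on $A_0$, so one cannot simply discard the components $\gamma_{j\to k}$ with $k\neq j+4$. What makes it work is that everything takes place in the category of motives of abelian type, where Jannsen's semisimplicity fixes the weight grading and renders the filtration of $A_0$ attached to any Chow--K\"unneth decomposition compatible with it; together with Kimura's nilpotence this annihilates the off--diagonal contributions on $A_0$. An alternative and perhaps cleaner route is to observe that the decomposition of $A_0(K)_\QQ$ in theorem~\ref{lin}, and its analogue for $A_0(A^{(n)})_\QQ$, is the one induced by the canonical Beauville decomposition of the abelian variety $A$, and that all correspondences entering Xu's construction --- being assembled from morphisms of abelian surfaces and incidence correspondences on symmetric products --- respect it; with respect to these decompositions $\Gamma_\ast$ is automatically block--diagonal with the stated index shift.
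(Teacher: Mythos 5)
Your setup agrees with the paper's: you isolate the $\lambda=(1^n)$ component $\Gamma_0\colon h(K)\to h(A^{(n)})(-2)$ of Xu's map together with the corresponding component $\Psi_0$ of a left inverse, observe that $(\Psi_0\circ\Gamma_0)_\ast=\hbox{id}$ on $A_0(K)_{\QQ}$ (corollary \ref{corxu}), and note that the off--diagonal pieces $\Pi_k^{A^{(n)}}\circ\Gamma_0\circ\Pi_j^K$ with $k\neq j+4$ are homologically trivial. The gap is in the descent to Chow groups. You reduce the corollary to the assertion that each off--diagonal piece acts as zero on $(\Pi_j^K)_\ast A_0(K)_{\QQ}$, i.e.\ that $\Gamma_\ast$ is genuinely block--diagonal for the filtrations attached to \emph{arbitrary} Chow--K\"unneth decompositions. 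That assertion is of Murre/Bloch--Beilinson type: the claim that a numerically trivial correspondence strictly lowers the level of the filtration induced by a Chow--K\"unneth decomposition is not a known consequence of finite--dimensionality, and Jannsen's semisimplicity lives in $\MM_{\rm hom}$, so it says nothing about the action on $A_0$ of a homologically trivial correspondence. Moreover an off--diagonal piece is a correspondence from $K$ to $A^{(n)}$, which cannot be composed with itself, so theorem \ref{nilp} does not apply to it directly; your remark that ``after composing with $\psi$\ldots a sufficiently high iterate vanishes'' only kills a self--correspondence of $K$, which does not yield the vanishing of the individual off--diagonal components on $A_0$. The alternative route via the Beauville decomposition has the same defect, plus one more: the statement concerns arbitrary Chow--K\"unneth decompositions, not only canonical ones.

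The paper's proof shows that block--diagonality of $\Gamma_\ast$ is not needed: one simply \emph{replaces} the map by $\Pi_{j+4}^{A^{(n)}}\circ\Gamma_0$, which tautologically lands in $(\Pi_{j+4}^{A^{(n)}})_\ast A_0(A^{(n)})_{\QQ}$, and proves that this modified map is still split injective on $(\Pi_j^K)_\ast A_0(K)_{\QQ}$. Concretely, set $L:=\Pi_j^K\circ\Psi_0\circ\Gamma_0\circ\Pi_j^K$ and $R:=\Pi_j^K\circ\Psi_0\circ\Pi_{j+4}^{A^{(n)}}\circ\Gamma_0\circ\Pi_j^K$; these are self--correspondences of $K$, $L-R$ is homologically trivial and hence nilpotent by finite--dimensionality, while $L_\ast=(\Pi_j^K)_\ast$ on $A_0(K)_{\QQ}$ by corollary \ref{corxu}. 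Expanding $L^{\circ N}=\bigl((L-R)+R\bigr)^{\circ N}$ then writes $(\Pi_j^K)_\ast$ as $\bigl((\hbox{something})\circ\Pi_{j+4}^{A^{(n)}}\circ\Gamma_0\circ\Pi_j^K\bigr)_\ast$ on $A_0(K)_{\QQ}$, which is exactly the required splitting. Your nilpotence ingredient is the right one; it must be applied to $L-R$ rather than invoked to erase the off--diagonal components of $\Gamma_0$ one by one.
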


\begin{proof} Let $\Psi$ denote a left inverse to the homomorphism 
  \[    \Gamma\colon h(K)\ \to\ N\ \ \ \hbox{in}\ \MM_{\rm rat}\ ,\]
  where $N\in \MM_{\rm rat}$ is a short--hand for the right--hand side of theorem \ref{xu}.
  As a consequence of theorem \ref{xu}, there are decompositions
  \[  \begin{split} &\Gamma=\Gamma_0+\Gamma_1\colon\ h(K)\ \to\ h(A^{(n)})\oplus N_1\ ,\\
                               &\Psi=(\Psi_0,\Psi_1)\colon\ h(A^{(n)})\oplus N_1\ \to\ h(K)\ \ \ \hbox{in}\ \MM_{\rm rat}\ \\
                               \end{split}\]
         satisfying
         \[       \Psi\circ \Gamma=       \Psi_0\circ \Gamma_0 +  \Psi_1\circ \Gamma_1=\hbox{id}\colon\ \ h(K)\ \to\ h(K)\ \ \ \hbox{in}\ \MM_{\rm rat}\ .\]                             
          Since $\Gamma_0$ sends $H^j(K,\QQ)$ to $H^{j+4}(A^{(n)},\QQ)$, there is a homological equivalence
          \[  L:=\Pi_j^K \circ\Psi_0\circ \Gamma_0   \circ \Pi_j^K     =\Pi_j^K \circ\Psi_0\circ \Pi_{j+4}^{A^{(n)}}\circ \Gamma_0   \circ \Pi_j^K =:R\ \ \ \hbox{in}\ H^{4n-4}(K\times K,\QQ)\ .\]
    Since $K$ has finite--dimensional motive, this means the difference $L-R$ is nilpotent. Upon developing, this implies
    \[ L^{\circ N} = Q_1+Q_2+\cdots+Q_N \ \ \ \hbox{in}\ A^{2n-2}(K\times K)_{\QQ}\ ,\]
   where $Q_j$ is a composition of $L$ and $R$ in which $R$ occurs at least once.
    
    Applying this to $0$--cycles, we obtain in particular
    \[   (L^{\circ N}){}_\ast= \bigl( Q_1+\cdots +Q_N\bigr){}_\ast\colon\ \ \ A_0(K)_{\QQ}\ \to\ A_0(K)_{\QQ}\ .\]
    Now we note that
      \[ L_\ast= (\Pi_j^K)_\ast\colon\ \ A_0(K)_{\QQ}\ \to\ A_0(K)_{\QQ}\] 
      thanks to corollary \ref{corxu}.
      It follows that
      \[  \begin{split}(\Pi_j^K)_\ast=((\Pi_j^K)^{\circ N})_\ast =(L^{\circ N})_\ast   & = \bigl(Q_1+\cdots +Q_N\bigr){}_\ast\\
      &=\bigl((\hbox{something})\circ R\circ \Pi_j^K\bigr){}_\ast\\
                                               &=\bigl( (\hbox{something})\circ \Pi_{j+4}^{A^{(n)}}\circ \Gamma_0\circ \Pi_j^K\bigr){}_\ast\colon\ \ \ A_0(K)_{\QQ}\ \to\ A_0(K)_{\QQ}\ .\\
                                               \end{split}
                                               \]
       It follows that
       \[ \hbox{id}=  \bigl( (\hbox{something})\circ \Pi_{j+4}^{A^{(n)}}\circ \Gamma_0\bigr){}_\ast\colon\ \  (\Pi^K_j)_\ast A_0(K)_{\QQ}\ \to\ (\Pi^K_j)_\ast A_0(K)_{\QQ}\ .\]
    This proves corollary \ref{corxu2}.
    \end{proof}

\section{Main result} 

\begin{theorem}\label{main} Let $X$ be an Enriques variety that is a quotient 
  \[ X=K/G\ ,\]
  where $K=K_n(A)$ is a generalized Kummer variety with $n\le 4$, and $G$ is a group of automorphisms acting freely and induced by a finite order automorphism of $A$.
  Then
    \[ A_0(X)=\ZZ\ .\]
   \end{theorem}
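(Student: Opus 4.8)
The plan is to reduce the statement about $A_0(X)=A_0(K/G)_{\QQ}$ to a statement about $G$-invariant zero-cycles on $K=K_n(A)$, and then to control those using the motivic decomposition of Xu (theorem \ref{xu}) together with the Chow--K\"unneth decomposition of Lin (theorem \ref{lin}). First I would observe that, since $X=K/G$ is a projective quotient variety, proposition \ref{quot} tells us the formalism of correspondences applies, and (by the remark following it) $h(X)\cong h(K)^G$, so that $A_0(X)_{\QQ}\cong \bigl(A_0(K)_{\QQ}\bigr)^G$, the $G$-invariant part. Because $K$ has finite-dimensional motive (theorem \ref{xu}), so does $X$, and proposition \ref{quotientnilp} is available. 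It therefore suffices to prove that the $G$-invariant zero-cycles of degree $0$ on $K$ vanish, i.e. $\bigl(A_0^{hom}(K)_{\QQ}\bigr)^G=0$, and then invoke the fact that $X$ has $\chi(\OO_X)=1$ together with $q(X)=h^{1,0}(X)=0$ (Enriques varieties have $h^{j,0}=0$ for all $j\ge1$) to conclude that over $\ZZ$ one gets $A_0(X)=\ZZ$ — the passage from $\QQ$- to $\ZZ$-coefficients being handled exactly as in the classical Bloch--Srinivas/Roitman argument (no torsion in $A_0$ because the Albanese is trivial).

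Next, the heart of the argument: I would use the Chow--K\"unneth decomposition $\Pi^K_0,\dots,\Pi^K_{4n-4}$ of theorem \ref{lin}, which has the property that only the even-indexed projectors act nontrivially on $A_0(K)_{\QQ}$, and in fact $A_0^{hom}(K)_{\QQ}=\bigoplus_{j\ \mathrm{even},\, j<4n-4}(\Pi^K_j)_\ast A_0(K)_{\QQ}$ corresponds (under the Beauville filtration) to the pieces $A_0^{(>0)}(K)_{\QQ}$. By corollary \ref{corxu2}, each such piece $(\Pi^K_j)_\ast A_0(K)_{\QQ}$ injects into $(\Pi^{A^{(n)}}_{j+4})_\ast A_0(A^{(n)})_{\QQ}$, and this injection is compatible with the $G$-action (since $\Gamma$ is induced by the natural geometric maps relating $K$ and $A^{(n)}$, and $G$ is induced by an automorphism of $A$, hence acts on $A^{(n)}$ compatibly). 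So the problem becomes: show that the relevant $G$-invariant Beauville pieces of $A_0(A^{(n)})_{\QQ}$ vanish, for $n\le 4$. Since $A$ is (for the known examples) a product of two elliptic curves, $A^{(n)}$ has Chow motive of abelian type with explicitly understood Beauville components, and the finite-order automorphism $\phi$ of $A$ inducing $G$ acts on $H^{*,0}(A^{(n)})$ without nonzero invariants in the degrees $\ge 2$ that survive — precisely because the quotient $X$ has $h^{j,0}(X)=0$ for $j\ge1$, which by the motivic decomposition forces the corresponding $G$-invariant Hodge pieces of $A^{(n)}$ to vanish, and finite-dimensionality then upgrades this from homological to rational equivalence.

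Concretely, the order of steps is: (1) rewrite $A_0(X)_{\QQ}=(A_0(K)_{\QQ})^G$ via $h(X)=h(K)^G$; (2) decompose $A_0(K)_{\QQ}$ using Lin's Chow--K\"unneth, isolating $A_0^{hom}$ as the sum of pieces $(\Pi^K_j)_\ast$ with $j$ even, $j<4n-4$; (3) for each such $j$, use corollary \ref{corxu2} to inject $(\Pi^K_j)_\ast A_0(K)_{\QQ}$ $G$-equivariantly into a Beauville piece of $A_0(A^{(n)})_{\QQ}$; (4) observe that the $G$-invariants of these pieces correspond to $G$-invariant parts of $H^{j+4,0}$-type cohomology (via the abelian-variety theory of Beauville and the fact that $A^{(n)}$ is motivated by an abelian variety), which vanish because the Hodge numbers $h^{m,0}(X)=0$ for $m\ge1$; (5) invoke finite-dimensionality (proposition \ref{quotientnilp}) to pass from numerical/homological vanishing to vanishing in the Chow group; (6) conclude $(A_0^{hom}(K)_{\QQ})^G=0$, hence $A_0(X)_{\QQ}=\QQ$, and finally remove $\QQ$-coefficients using triviality of the Albanese of $X$ and absence of torsion (Roitman). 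The main obstacle I anticipate is step (4): making rigorous the claim that the $G$-invariant Beauville pieces of $A_0(A^{(n)})_{\QQ}$ in the relevant degrees vanish. This requires either a direct computation of how the finite-order automorphism $\phi$ acts on the Chow groups of the symmetric products $A^{(n)}$ (using that $A$ is a product of elliptic curves, so all its Chow groups and those of its powers are understood, and the Beauville decomposition is explicit), or a cleaner argument: the $G$-invariant part of $h(A^{(n)})$ is a direct summand of $h(X)$ up to the shift, so its Chow groups inject into those of $X$, and one runs the Bloch--Srinivas machinery on $X$ directly — bootstrapping from the vanishing of $h^{j,0}(X)$ for $j\ge2$ via finite-dimensionality, exactly as in Kimura's proof of Bloch's conjecture for surfaces with $p_g=0$ and finite-dimensional motive, now in higher dimension where the decomposition of the diagonal has more terms but each is controlled by corollary \ref{corxu2} and the bound $n\le4$ keeps the number of pieces finite and manageable.
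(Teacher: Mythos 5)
Your overall strategy matches the paper's at a high level: reduce to $\QQ$--coefficients via Rojtman, decompose zero--cycles by a Chow--K\"unneth decomposition (Lin), inject the pieces into zero--cycles on a symmetric product via corollary \ref{corxu2}, and use finite--dimensionality to upgrade cohomological information to Chow--theoretic vanishing. But the step you yourself flag as the main obstacle --- your step (4) --- is exactly where the real content lies, and as written it is circular: you assert that the $G$--invariant Beauville pieces of $A_0(A^{(n)})_{\QQ}$ vanish ``because $h^{m,0}(X)=0$ for $m\ge 1$, and finite--dimensionality upgrades this from homological to rational equivalence.'' Finite--dimensionality does not convert vanishing of Hodge numbers directly into vanishing of Chow groups (that implication \emph{is} Bloch's conjecture); it converts a support condition on an \emph{algebraic cycle representing a K\"unneth component} into a statement about its action on Chow groups, so you must actually produce cycles supported in small dimension. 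The paper does this by introducing the quotient surface $A'=A/\langle\phi_0\rangle$ (where $\phi=t\circ\phi_0$), observing that $\phi_0$ must act non--symplectically so that $p_g(A')=0$, and then using the Kahn--Murre--Pedrini refined Chow--K\"unneth decomposition of the \emph{surface} $A'$, in which $\Pi_2^{A'}$ is supported on a divisor. Product projectors $\Pi_{k_1}^{A'}\times\cdots\times\Pi_{k_n}^{A'}$ with some $k_\ell\le 2$ are then supported on $(\mathrm{divisor})\times(A')^n$ and kill zero--cycles, which covers all $k<3n$. Your proposal never identifies $A'$ or any equivalent mechanism, so the vanishing in step (4) is not established.

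Even granting that, the argument is incomplete for $n=4$: the pieces $j\ge 4n-6$ are handled by elementary considerations (the paper's lemma \ref{two}) and $j<3n-4$ by the divisor--support argument, but for $n=4$ this leaves the middle piece $j=8$, corresponding to $\mathrm{Sym}^4 h^3(A')$. There the paper needs a genuinely separate input: $\wedge^4 H^3(A',\QQ)$ is one--dimensional and consists of Hodge classes, and the Hodge conjecture for self--products of abelian surfaces (Abdulali) provides an algebraic representative supported on $V\times W$ with $W$ of codimension $6$, after which finite--dimensionality finishes. Nothing in your proposal addresses this case; ``the bound $n\le 4$ keeps the number of pieces manageable'' does not substitute for it. Finally, the claimed $G$--equivariance of the splitting $\Gamma_\ast$ also requires proof (the paper's proposition \ref{equiv}): Xu's correspondence involves choices (a point $x\in A$, strata indexed by torsion points) that $G$ permutes, and one must check that the $\lambda=(1^n)$ stratum descends to the quotient and that $\varphi$ descends to a morphism $A/\langle\phi\rangle\times K_0^{((1^n))}/G\to (A')^{(n)}$.
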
 

\begin{proof} The theorem is true for $n=2$, so we will suppose from now on that $n\ge 3$.
Thanks to Rojtman \cite{R}, we only need to prove that $A_0(X)_{\QQ}=\QQ$.

Write $G=<\psi>$ where $\psi\in\hbox{Aut}(K)$ is an automorphism (of order $d=\hbox{index}(X)$) induced by a finite order automorphism 
    \[\phi\in A[n]\rtimes \hbox{Aut}_\ZZ(A)\subset \hbox{Aut}(A)\ .\] 
    We will write $\phi=t\circ \phi_0$, where $t$ is a translation on $A$ and $\phi_0$ is a group automorphism. Let
  \[ A^\prime:= A/<\phi_0> \ .\]
  The surface $A^\prime$ has at most quotient singularities (note that $\phi$ and $\phi_0$ might well have fixpoints even though $\psi$ is fixpoint free). The action of $\phi_0$ must be non--symplectic (for otherwise $p_g(X)=1$), and so $p_g(A^\prime)=0$. 
     
  We have seen that the K\"unneth components of $X$ are algebraic (this follows from theorem \ref{xu}, or from the results of \cite{CM}). Combined with the fact that $X$ has finite--dimensional motive, this implies \cite[Lemma 5.4]{J} that
  there exists a Chow--K\"unneth decomposition $\Pi_j^X\in A^{2n-2}(X\times X)_{\QQ}$ for $X$. 
  To prove theorem \ref{main}, it suffices to prove 
  \begin{equation}\label{vanish}  (\Pi^X_j)_\ast A_0^{hom}(X)_{\QQ}=0\ \ \ \hbox{for\ all\ }j\ .\end{equation}
  
  The next lemma enables us to change the Chow--K\"unneth projectors to our convenience; we are not stuck with one particular Chow--K\"unneth decomposition.
  
  \begin{lemma}\label{jans} Let $X$ be a variety with finite--dimensional motive, and such that the K\"unneth components of $X$ are algebraic. Let $\Pi_j^X$ and $\hat{\Pi}_j^X$ be two Chow--K\"unneth decompositions for $X$. Then for any $i$ and $j$, there is equivalence
    \[ (\Pi^X_j)_\ast A_i^{hom}(X)_{\QQ}=0  \ \Leftrightarrow\  (\hat{\Pi}^X_j)_\ast A_i^{hom}(X)_{\QQ}=0  \ .\]
    \end{lemma}
    
    \begin{proof} This is well--known, and easily proven. For later use, we prove a slightly more general statement:
    
   \begin{lemma}\label{jans2} Let $X$ be as in lemma \ref{jans}. Let $\Pi_j^X$ be a Chow--K\"unneth decomposition, and let $\hat{\pi}_j^X$ be any (not necessarily idempotent, or orthogonal) cycles mapping to the K\"unneth components $\pi_j\in H^\ast(X\times X,\QQ)$. Then for any $i$ and $j$, we have
   \[  (\hat{\pi}^X_j)_\ast A_i^{hom}(X)_{\QQ}=0  \ \Rightarrow\  ({\Pi}^X_j)_\ast A_i^{hom}(X)_{\QQ}=0  \ .\]
    \end{lemma}
    
    \begin{proof} We have
      \[ (\Pi^X_j-\hat{\pi}^X_j)=0\ \ \ \hbox{in}\ H^{2m}(X\times X,\QQ)\ \]
      (where $m:=\dim X$). From Kimura's nilpotence theorem \cite{Kim}, it follows that there exists $N\in \NN$ such that
      \[ (\Pi^X_j-\hat{\pi}^X_j)^{\circ N}=0\ \ \ \hbox{in}\ A^{m}(X\times X,\QQ)\ .\]
      Developing this expression, we obtain
      \[  \Pi^X_j=(\Pi^X_j)^{\circ N}= P_1+ P_2+\cdots +P_m\ \ \ \hbox{in}\ A^{m}(X\times X)_{\QQ}\ ,\]  
      where each $P_j$ is a composition of correspondences containing at least one copy of $\hat{\pi}^X_j$. But then the right--hand side acts trivially on $A_i^{hom}(X)_{\QQ}$ (by hypothesis), and hence so does the left--hand side.    
    \end{proof}
    
     \end{proof}
    
    Let us now return to the Enriques variety $X=K/G$, and let us define cycles
      \[  \hat{\pi}_j^X:={1\over d} \, \Gamma_p\circ \Pi_j^K\circ {}^t \Gamma_p\ \ \ \in\ A^n(X\times X)_{\QQ}\ ,\]
      where $p\colon K\to X$ is the quotient morphism, and the $\Pi_j^K$ are as in theorem \ref{lin}.
      It follows from theorem \ref{lin} that
      \[  (\hat{\pi}_j^X)_\ast  A_0(X)_{\QQ}\ \subset\ (\Pi_j^K)_\ast A_0(K)_{\QQ}=0\ \ \ \hbox{for}\ j\ \hbox{odd}\ .\]
      In view of lemma \ref{jans2}, it follows that the vanishing (\ref{vanish}) holds for all odd $j$.

     It remains to establish the vanishing (\ref{vanish}) for even $j$. 
  The next lemma establishes two easy cases of (\ref{vanish}):
  
 \begin{lemma}\label{two} Set--up as in theorem \ref{main}. Then   
    \[  (\Pi^X_{j})_\ast A_0^{hom}(X)_{\QQ}=0\ \ \ \hbox{for}\ j\ge 4n-6.\]
    \end{lemma}

\begin{proof} (In view of lemma \ref{jans}, if the lemma is true for one Chow--K\"unneth decomposition, it is true for {\em all\/} Chow--K\"unneth decompositions.)

The case $j=4n-4$ is obvious (indeed, $\Pi^X_{4n-4}$ is just $X\times x$ for $x\in X$, and so the action factors over $A_0^{hom}(x)_{\QQ}=0$).
As for the second case, we observe that $H^2(X,\OO_X)=0$ so that $H^2(X,\QQ)$ is algebraic. By hard Lefschetz, $H^{4n-6}(X,\QQ)$ is also algebraic. This implies that the K\"unneth component $\pi^X_{4n-6}$ in cohomology is supported on $D\times C\subset X\times X$, where $D\subset X$ is a (possibly reducible) divisor and $C\subset X$ is a (possibly reducible) curve. The action of $\pi^X_{4n-6}$ on $A^j(X)_{\QQ}$ factors over $A^j(\wt{D})_{\QQ}$ (where $\wt{D}\to D$ is a desingularisation), hence $\pi^X_{4n-6}$ acts trivially $A_0(X)_{\QQ}$. Applying lemma \ref{jans2}, the same holds for any Chow--K\"unneth projector $\Pi^X_{4n-6}$.
\end{proof}

   We now state an equivariant version of corollaries \ref{corxu} and \ref{corxu2}:
   
 \begin{proposition}\label{equiv} Assumptions as in theorem \ref{main}. 
  
  \noindent
  (\rom1)
   There is a split injection
   \[ \Gamma_\ast\colon\ \ A_0(X)_{\QQ}= A_0(K)_{\QQ}^G\ \to\  A_0((A^\prime)^{(n)})_{\QQ}\ .\]
   
   \noindent
   (\rom2)
   For any $j$, there are split injections
   \[ (\Pi^X_j)_\ast A_0(X)_{\QQ}\ \to\ (\Pi_{j+4}^{(A^\prime)^{(n)}})_\ast A_0((A^\prime)^{(n)})_{\QQ}\ \]
   (here, $\Pi_j^X$ and $\Pi_{j}^{(A^\prime)^{(n)}}$ denote Chow--K\"unneth decompositions of $X$, resp. of $(A^\prime)^{(n)}$).   
   
    \end{proposition}  
   
  \begin{proof}
   To prove this, one needs to delve a bit into the proof of theorem \ref{xu}, i.e. one needs to understand Xu's result \cite{Xu}. 
   
   By construction of $K$, there is a commutative diagram (where vertical arrows are closed inclusions)
   \[  \begin{array}[c]{ccccc}
       K=K_n(A) &\to & K^{(n)}(A):=\sigma^{-1}(0) & \to &0\\
       \downarrow && \downarrow && \downarrow\\
       A^{[n]} &\to& A^{(n)} &\xrightarrow{\sigma} & \ \ A\ .\\
       \end{array}\]
   
   For any $\lambda\in P(n)$, let $K^\lambda:=\ker (s_\lambda)$ where
     \[ \begin{split} s_\lambda\colon\ \ A^{a_1}\times\cdots\times A^{a_r}\ &\to\ A\ ,\\
                  (x_1,\ldots,x_{\ell_\lambda})\ &\mapsto\ {\displaystyle\sum_{i=1}^{\ell_\lambda}} \lambda_i x_i\ .\\
                  \end{split}\]
   We have a stratification
     \[  K^\lambda=\coprod_{\tau\in A[e(\lambda)]} K^\lambda_\tau\ ,\]
     where
     \[ K^\lambda_\tau:= \bigl\{ (   x_1,\ldots,x_{\ell_\lambda})\in A^{\ell_\lambda}\ \vert\ \sum_{i=1}^{\ell_\lambda}  {\lambda_i  \over e(\lambda)} x_i=\tau\ \bigr\}\ .\]
     Let $S_a$ denote the symmetric group on $a$ elements.
     The action of $S_\lambda:=S_{a_1}\times\cdots\times S_{a_r}$ on $A^\lambda:=A^{a_1}\times\cdots\times A^{a_r}$   restricts to $K^\lambda$ (and to the $K^\lambda_\tau$); the quotient is denoted
     $K^{(\lambda)}$ (resp. $K^{(\lambda)}_\tau$).
     
     The natural morphism
     \[  A^{(\lambda)}:= A^\lambda/ S_\lambda\ \to\ A^{(n)} \]
     induces morphisms
     \[ K^{(\lambda)}_\tau\ \to\ K^{(n)}(A)\ .\]
     Then one defines correspondences
     \[    \hat{\Theta}^\lambda_\tau := ( K^{(\lambda)}_\tau \times_{K^{(n)}(A)} K)_{\rm red}\ \ \in\ A^\ast( K^{(\lambda)}_\tau\times K)\]
     (here $()_{\rm red}$ means one takes the subvariety with the reduced scheme structure), and
     \[ \Delta_{\lambda,\tau}:= {1\over d_{\lambda,\tau}}   \hat{\Theta}^\lambda_\tau\circ {}^t       \hat{\Theta}^\lambda_\tau\ \ \ \in A^{2n-2}(K\times K)_{\QQ} \]
     (where $d_{\lambda,\tau}\in\QQ$ is some constant).
     One can then prove (using the Beilinson--Bernstein--Deligne decomposition theorem) there is a decomposition
        \[ \Delta_K ={\displaystyle\sum_{\lambda\in P(n)} \sum_{\tau\in A[e(\lambda)]}}  \Delta_{\lambda,\tau}\ \ \ \hbox{in}\ A^{2n-2}(K\times K)_{\QQ} \]
     \cite[Lemma 2.5]{Xu}. This gives rise to an isomorphism of Chow motives
      \begin{equation}\label{isoK} h(K)\cong \bigoplus_{\lambda\in P(n)} \bigoplus_{\tau\in A[e(\lambda)]} h(K^{(\lambda)}_\tau)(n-\ell_\lambda)\ \ \ \hbox{in}\ \MM_{\rm rat} 
      \end{equation}
      \cite[Theorem 2.7]{Xu}.
      
      Next, one considers the natural morphism
      \[ \begin{split} \varphi\colon\ \ A\times K^{(\lambda)}_\tau\ &\ \to\ A^{(\lambda)}\ ,\\     
                                (x,z)\ &\ \mapsto\    t_x(z)\ \\
                                \end{split}\]
         (where $t_x$ is the translation by $x$), and one proves $\varphi$ induces an isomorphism of Chow motives
         \begin{equation}\label{iso2} \Gamma_\varphi\colon\ \ h(A\times K^{(\lambda)}_\tau)\cong h(A^{(\lambda)})\ \ \ \hbox{in}\ \MM_{\rm rat}\  \end{equation}
         \cite[Corollary 2.8]{Xu}. Combining isomorphisms (\ref{iso2}) and (\ref{isoK}) gives theorem \ref{xu}.     
         
      Consider now the Enriques variety
      \[ X=K/G\ ,\]
      where $G$ is a group acting freely on $K$ and induced by a finite order automorphism $\phi$.
      Since we are only interested in $0$--cycles, we only need to consider the one partition of length $n$, i.e. $\lambda=(1^n)$. We have that
      \[ K^{((1^n))}= K^{((1^n))}_0\]
      is invariant under the automorphism of $A^{(n)}$ induced by $\phi$; we write $K^{((1^n))}_0/G$ for the quotient. Fibre product gives rise to correspondences
      \[ \begin{split}  \hat{\Theta}^{(1^n)}_0(G):= ({K^{((1^n))}_0/ G}\times_{ {K^{(n)}(A)\over G}} X)_{\rm red}\ \ \ \in A^\ast(  K^{((1^n))}_0/G\times X)\ ,\\
                   \Delta_{(1^n),0}^G:= {1\over d_{(1^n),0}}\hat{\Theta}^\lambda_0(G)\circ {}^t \hat{\Theta}^\lambda_0(G)\ \ \ \in A^{2n-2}(X\times X)_{\QQ}\ .\\
                   \end{split}\]
          Taking $0$--cycles, we get a commutative diagram
          \[ \begin{array}[c]{cccc}
             A_0(K)_{\QQ}\cong &(\Delta_{(1^n),0})_\ast A_0(K)_{\QQ} &     \xrightarrow{  ({}^t \hat{\Theta}_0^{(1^n)})_\ast}& A_0(K_0^{((1^n))})_{\QQ}\\
            & \uparrow &&\uparrow\\
                A_0(X)_{\QQ}\cong &(\Delta^G_{(1^n),0})_\ast A_0(X)_{\QQ} &     \xrightarrow{  ({}^t \hat{\Theta}_0^{(1^n)}(G))_\ast}& A_0({K_0^{((1^n))}/ G})_{\QQ}\\  
                \end{array}\]     
           By (\ref{isoK}), the upper horizontal arrow is an isomorphism (with inverse given by $ ( \hat{\Theta}_0^{(1^n)})_\ast $). The vertical arrows are split injections. It follows that the lower horizontal arrow is an isomorphism (with inverse given by $ ( \hat{\Theta}_0^{(1^n)}(G))_\ast $).
           
           One checks that the morphism $\phi\in\hbox{Aut}(A)$ induces a morphism
           \[ \varphi^\prime\colon\ \ {A/ < \phi>}\times {K_0^{((1^n))}/ G}\ \to\ (A^\prime)^{(n)} \ .\]
           (Indeed, write $\phi= t\circ \phi_0$, where $t$ is a translation on $A$ and $\phi_0$ is a group automorphism. It is readily checked that $\phi_0$ commutes with $\varphi$, i.e.
           there is a commutative diagram
           \[  \begin{array}[c]{ccc}
              A\times K^{((1^n))}_0\ & \xrightarrow{\varphi}& A^{(n)}\\
           \ \  \ \ \downarrow{{}^{\phi_0\times \phi_0^{(n)}}}&&\ \  \downarrow{{}^{\phi_0^{(n)}}}  \\    
              A\times K^{((1^n))}_0\ & \xrightarrow{\varphi}& \ A^{(n)}\ ,\\
              \end{array}\]
 where $\phi_0^{(n)}$ is the morphism induced by $\phi_0$. As for the translation $t=t_a$, where $a\in A[n]$, we have a commutative diagram
   \[  \begin{array}[c]{ccc}
              A\times K^{((1^n))}_0\ & \xrightarrow{\varphi}& A^{(n)}\\
           \ \  \ \ \downarrow{{}^{t_{-a}\times t^{(n)}}}&&\ \  \downarrow{{}^{\hbox{id}}}  \\    
              A\times K^{((1^n))}_0\ & \xrightarrow{\varphi}& \ A^{(n)}\ .\\
              \end{array}\]  
       This proves the existence of $\varphi^\prime$.)

       Taking $0$--cycles, we get a commutative diagram
       \[ \begin{array}[c]{ccc} 
               A_0(A\times K^{((1^n))}_0)_{\QQ} & \xrightarrow{\varphi_\ast} & A_0(A^{(n)})_\QQ\\
               \uparrow &&\uparrow\\
               A_0(A/<\phi> \times {K^{((1^n))}_0/ G})_{\QQ} & \xrightarrow{(\varphi^\prime)_\ast} & \ \ A_0((A^\prime)^{(n)})_\QQ\ .\\   
               \end{array}\]
       By (\ref{iso2}), the upper horizontal arrow is an isomorphism (with inverse given by a multiple of $\varphi^\ast$). The vertical arrows are split injections. It follows that the lower horizontal arrow is an isomorphism. To prove (\rom1) of proposition \ref{equiv}, we consider the composition
       \[   A_0(X)_{\QQ}     \ \xrightarrow{  ({}^t \hat{\Theta}_0^{(1^n)}(G))_\ast}\ A_0({K_0^{((1^n))}/ G})_{\QQ} \ \xrightarrow{}\ 
              A_0(A/<\phi> \times {K^{((1^n))}_0/ G})_{\QQ} \ \xrightarrow{(\varphi^\prime)_\ast} \  A_0((A^\prime)^{(n)})_\QQ   \ ,\]
              where the first and last arrow are isomorphisms, and the second arrow (defined in the obvious way) is a split injection.    
       
         Statement (\rom2) of proposition \ref{equiv} is deduced from (\rom1) using finite--dimensionality; this is the same argument as corollary \ref{corxu2}.                             
           \end{proof} 
     
Using proposition \ref{equiv}, we can establish the required vanishing (\ref{vanish}) in some further cases:

\begin{lemma}\label{three} Set--up as in theorem \ref{main}. Then
    \[  (\Pi^X_{j})_\ast A_0^{hom}(X)_{\QQ}=0\ \ \ \hbox{for}\ j< 3n-4.\]
  Moreover, if $n= 4$ then 
    \[  (\Pi^X_{8})_\ast A_0^{hom}(X)_{\QQ}=0\ .\]
    \end{lemma}

\begin{proof} (Again, in view of lemma \ref{jans}, if the lemma is true for one Chow--K\"unneth decomposition, it is true for {\em all\/} Chow--K\"unneth decompositions.)

Thanks to proposition \ref{equiv}(\rom2), it suffices to prove
  \[  (\Pi_{k}^{(A^\prime)^{(n)}})_\ast A_0((A^\prime)^{(n)})_{\QQ}=0\ \ \ \hbox{for\ all\ }k< 3n\ .\]
  
 Let 
   \[\Delta_{A^\prime}=\Pi_0^{A^\prime}+\Pi_1^{A^\prime}+\Pi_2^{A^\prime}+\Pi_3^{A^\prime}+\Pi_4^{A^\prime}\ \ \ \hbox{in}\ A^2(A^\prime\times A^\prime)_{\QQ}\]
    be a Chow--K\"unneth decomposition for $A^\prime$. Since $p_g(A^\prime)=0$ and $A^\prime$ has finite--dimensional motive, we may suppose $\Pi_2^{A^\prime}$ is supported on $D\times D$, with $D\subset A^\prime$ a divisor (in other words, the ``transcendental part of the motive'' of $A^\prime$ is $0$, in the language of \cite{KMP}). Also we may suppose that $\Pi_0^{A^\prime}=x\times A^\prime$ for $x\in A^\prime$ and $\Pi_1^{A^\prime}$ is supported on $D\times A^\prime$, with $D\subset A^\prime$ a divisor (these are general facts, for the Chow--K\"unneth decomposition of any surface \cite{KMP}).
 
 As is well--known, the correspondences $\Pi_k^{(A^\prime)^{(n)}}$ are induced by correspondences
   \[  \Pi_k^{(A^\prime)^n}:={ \displaystyle\sum_{k_1+\cdots+k_n=k}}  \Pi_{k_1}^{A^\prime}\times\cdots\times \Pi_{k_n}^{A^\prime}\ \ \ \in A^{2n}((A^\prime)^n  \times (A^\prime)^n)\ ,\]
   which define an $S_n$--invariant Chow--K\"unneth decomposition of $(A^\prime)^n$.
   
 There is a commutative diagram
   \[ \begin{array}[c]{ccc}
           A_0((A^\prime)^n)_{\QQ}   & \xrightarrow{  (\Pi_k^{(A^\prime)^n})_\ast} &     A_0((A^\prime)^n)_{\QQ}\\
           \uparrow & & \uparrow\\
           A_0((A^\prime)^{(n)})_{\QQ}   & \xrightarrow{  (\Pi_k^{(A^\prime)^{(n)}})_\ast} &     A_0((A^\prime)^{(n)})_{\QQ}\\
           \end{array}\]

 Suppose now $j<3n$. Then each summand occurring in the definition of $\Pi_k^{(A^\prime)^n}$ contains at least one $\Pi_{k_\ell}^{A^\prime}$ with $k_\ell\le 2$. But this means (by the choice of $\Pi_\ast^{A^\prime}$ we have made above) that $\Pi_k^{(A^\prime)^n}$ is supported on $\hbox{(divisor)}\times (A^\prime)^n$ and so  acts trivially $0$--cycles:
   \[  (\Pi_{k}^{(A^\prime)^{(n)}})_\ast A_0((A^\prime)^{(n)})_{\QQ}=0\ \ \ \hbox{for\ all\ }k< 3n\ .\]
   
   It only remains to treat the case $n= 4$ and $k=12$. All the summands containing at least one $\Pi_{k_\ell}^{A^\prime}$ with $k_\ell\le 2$ act trivially on $0$--cycles (for the same reason as above). So we may suppose all the $k_\ell$ are $3$, and we need to prove that
   \[  \Bigl(  \Pi_3^{A^\prime}   \times \Pi_3^{A^\prime}\times  \Pi_3^{A^\prime} \times \Pi_3^{A^\prime}\Bigr) {}_\ast A_0 ((A^\prime)^{(4)})_{\QQ} =0\ .\]
  But there is a natural isomorphism
    \[  A_0 ((A^\prime)^{(4)})_{\QQ}\cong \bigl( {\displaystyle\sum_{\sigma\in S_4}} \Gamma_\sigma \bigr){}_\ast A_0 ((A^\prime)^{4})_{\QQ}    \ \subset\    A_0 ((A^\prime)^{4})_{\QQ}\ .\]
  One can check that the correspondences ${\displaystyle\sum_{\sigma\in S_4}} \Gamma_\sigma$ and 
  $ \Pi_3^{A^\prime}   \times \Pi_3^{A^\prime}\times  \Pi_3^{A^\prime} \times \Pi_3^{A^\prime}$ commute \cite[Lemma 3.4]{Kim}. It follows that
    \[  \Bigl(  \Pi_3^{A^\prime}   \times \Pi_3^{A^\prime}\times \Pi_3^{A^\prime}\times \Pi_3^{A^\prime}\Bigr) {}_\ast A_0 ((A^\prime)^{(4)})_{\QQ}\cong (\hbox{Sym}^4 \Pi_3^{A^\prime})_\ast 
        A_0 ((A^\prime)^{4})_{\QQ} \ ,\]
        where $\hbox{Sym}^4 \Pi_3^{A^\prime}$ is the projector defining the Chow motive $\hbox{Sym}^4 h^3(A^\prime)$ in the language of \cite[Definition 3.5]{Kim}. The action of the correspondence
        $\hbox{Sym}^4 \Pi_3^{A^\prime}$ on cohomology is projection to $\wedge^4 H^3(A^\prime,\QQ)$, which is one--dimensional
            (since $\dim H^3(A^\prime,\QQ)=4$ ) and consists of Hodge classes:
                          \[    \wedge^4 H^3(A^\prime,\QQ)\ \subset H^{12}((A^\prime)^4,\QQ) \cap F^6\ ,\]
                          where $F^\ast$ denotes the Hodge filtration.
           This implies that 
           \[         \hbox{Sym}^4 \Pi_3^{A^\prime}\ \in  \Bigl(  H^4((A^\prime)^4,\QQ) \cap F^2\Bigr) \otimes     \Bigl(  H^{12}((A^\prime)^4,\QQ) \cap F^6\Bigr)   \ \subset\ H^{16}((A^\prime)^4\times (A^\prime)^4),\QQ)\ .\]                          
         Next, we note that the Hodge conjecture is known to be true for self--products of abelian surfaces \cite[7.2.2]{Ab2}. This implies the same is true for the quotient variety $(A^\prime)^4$. (Indeed, let $p\colon A^4\to (A^\prime)^4$ denote the quotient morphism, and assume $a\in H^\ast((A^\prime)^4,\QQ)$ is a Hodge class. Then $p^\ast(a)$ is a cycle class. It follows that $p_\ast p^\ast(a)$ is a cycle class, and $p_\ast p^\ast(a)$ is a multiple of $a$ because of the isomorphism $H^i((A^\prime)^4,\QQ)\cong H_{8-i}((A^\prime)^4,\QQ)$.)
         
      Using the truth of the Hodge conjecture, we find that there is a cohomological equality
            \[  \hbox{Sym}^4 \Pi_3^{A^\prime}=      \gamma
             \ \ \ \hbox{in}\ H^{16}((A^\prime)^4\times (A^\prime)^4,\QQ)\ ,\]
             where $\gamma$ is a cycle supported on $V\times W$ for closed subvarieties $V, W\subset (A^\prime)^4$ of codimension $2$ resp. $6$.
       Since $(A^\prime)^4$ has finite--dimensional motive, this implies (proposition \ref{quotientnilp}) there exists $N\in \NN$ such that
         \[ \Bigl(\hbox{Sym}^4 \Pi_3^{A^\prime}-     \gamma \Bigr)^{\circ N}=0  \ \ \ \hbox{in}\ A^{8}((A^\prime)^4\times (A^\prime)^4)_{\QQ}\ .\]         
              Developing this expression (and noting that $\hbox{Sym}^4\Pi_3^{A^\prime}$ is idempotent), this gives a rational equivalence
   \[  \hbox{Sym}^4 \Pi_3^{A^\prime}= ( \hbox{Sym}^4 \Pi_3^{A^\prime})^{\circ N}=  S_1+\cdots +S_m         \ \ \ \hbox{in}\ A^{8}((A^\prime)^4\times (A^\prime)^4)_{\QQ}\ ,\]
   where each $S_i$ is a composition of correspondences in which $\gamma$ occurs at least once. But $\gamma$ acts trivially on $0$--cycles (for dimension reasons) and so the right--hand side also acts trivially on $0$--cycles, and we are done.            
         \end{proof}  
         
 Theorem \ref{main} can now be proven by combining lemmas \ref{two} and \ref{three}. Indeed, suppose $n=3$ or $n=4$. Then all even integers $j\in [0,4n-4]$ are either covered by lemma \ref{two} or covered by lemma \ref{three}. It follows there is no Chow--K\"unneth projector $\Pi_j^X$ acting non--trivially on $A_0^{hom}(X)_{\QQ}$ (i.e., the vanishing (\ref{vanish}) is proven), and so this group is trivial.        
   \end{proof}

\begin{remark}\label{examples} Clearly, theorem \ref{main} applies to the examples furnished by proposition \ref{ex}.
\end{remark}

\begin{remark} Note that the assumption on the group $G$ in theorem \ref{main} is more restrictive than just asking that $G$ is a group of natural automorphisms. Also, the dimension hypothesis $n\le 4$ was merely made for commodity, and is perhaps not really necessary. However, in view of the fact that all known examples of Enriques varieties dominated by generalized Kummer varieties (are given by proposition \ref{ex} and so) fit in with these hypotheses, it seems trifling to worry too much about these restrictions.
\end{remark}

As a corollary, some cases of the generalized Hodge conjecture are verified:

\begin{corollary}\label{BS} Let $X$ be an Enriques variety as in theorem \ref{main}. Then $H^j(X,\QQ)$ is supported on a divisor for all $j>0$.
\end{corollary}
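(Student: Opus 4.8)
The plan is to obtain corollary \ref{BS} as a formal consequence of theorem \ref{main}, via the Bloch--Srinivas ``decomposition of the diagonal''. First I would observe that, because $G$ acts \emph{freely} on $K$, the quotient $X=K/G$ is an \'etale $G$-cover of $K$ and hence is itself a smooth projective variety; so the classical correspondence machinery for smooth projective varieties applies directly to $X$, with no need to invoke the quotient-variety formalism used elsewhere in this note. By theorem \ref{main} we have $A_0(X)=\ZZ$, hence $A_0^{hom}(X)_{\QQ}=0$ and $A_0(X)_{\QQ}\cong\QQ$.

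Next, I would feed this triviality into the standard argument of Bloch--Srinivas \cite{BS}: $A_0(X)_{\QQ}\cong\QQ$ propagates (over $\C$) to the function field of $X$, so the class of the generic point agrees rationally with a constant point; spreading out then yields a decomposition of the diagonal
  \[ \Delta_X = [x\times X] + \Gamma \ \ \ \hbox{in}\ A^{2n-2}(X\times X)_{\QQ}\ ,\]
where $x\in X$ is a closed point and $\Gamma$ is supported on $X\times D$ for some divisor $D\subsetneq X$. I would then restrict this identity to $X\times U$ with $U:=X\setminus D$: since $\Gamma$ lies over $D$ in the second factor, $\Gamma|_{X\times U}=0$, so the graph of the open immersion $j\colon U\hookrightarrow X$ becomes equal to the constant cycle $[x\times U]$, already in $H^{4n-4}(X\times U,\QQ)$. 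Viewed as correspondences, the graph of $j$ induces the restriction map $j^\ast\colon H^i(X,\QQ)\to H^i(U,\QQ)$, whereas $[x\times U]$ induces on $H^i(X,\QQ)$ a map factoring through cup product with the point class $[x]\in H^{4n-4}(X,\QQ)$, which annihilates every class of positive degree for degree reasons. Hence $j^\ast=0$ on $H^i(X,\QQ)$ for all $i>0$, and by the Gysin/localization sequence this says exactly that $H^i(X,\QQ)$ is supported on the divisor $D$ for all $i>0$.

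I do not expect a genuine obstacle: once theorem \ref{main} is available, everything is a verbatim application of Bloch--Srinivas. The only two points worth a line of care are the reduction to the smooth case (immediate from freeness of the $G$-action) and the fact that ``supported on a divisor'' must be read in the \emph{geometric} sense of coniveau $\ge1$ --- which is precisely what the decomposition of the diagonal provides, so that corollary \ref{BS} genuinely records instances of the generalized Hodge conjecture (the Hodge-theoretic coniveau of $H^i(X,\QQ)$ being already $\ge1$ since $h^{i,0}(X)=0$).
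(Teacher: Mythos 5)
Your argument is correct and is exactly the route the paper takes: the printed proof of corollary \ref{BS} simply cites Bloch--Srinivas \cite{BS} for the fact that $A_0(X)_{\QQ}=\QQ$ forces $H^{j}(X,\QQ)$ to be supported on a divisor for $j>0$, and your write-up just makes the decomposition-of-the-diagonal argument behind that citation explicit. No gaps; the remarks on smoothness of $X=K/G$ and on geometric coniveau are accurate.
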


\begin{proof} As is well--known \cite{BS}, this holds for any variety with trivial Chow group of zero--cycles.
\end{proof}

One can also say something about codimension $2$ cycles:

\begin{corollary} Let $X$ be an Enriques variety as in theorem \ref{main}. Then $A^2_{AJ}(X)_{\QQ}=0$.
\end{corollary}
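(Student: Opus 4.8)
The plan is to deduce $A^2_{AJ}(X)_{\QQ}=0$ from the existence of a Chow--K\"unneth decomposition $\Pi^X_j$ for $X$ together with the vanishing $A_0(X)_{\QQ}=\QQ$ proven in theorem \ref{main}. First recall that for a variety with finite--dimensional motive admitting a Chow--K\"unneth decomposition, the Abel--Jacobi trivial cycles of codimension $2$ are controlled by a single projector: concretely, the standard Bloch--Srinivas type argument shows that
\[ A^2_{AJ}(X)_{\QQ}=(\Pi^X_{2n-2})_\ast A^2(X)_{\QQ}\ ,\]
where $2n-2=\dim X$. Indeed $(\Pi^X_0)_\ast$ and $(\Pi^X_1)_\ast$ act as zero on $A^2$ (the supports are respectively a point times $X$, and a divisor times $X$, hence factor through lower--dimensional varieties), $(\Pi^X_2)_\ast$ and $(\Pi^X_3)_\ast$ act through $A^2_{hom}$ modulo Abel--Jacobi (by the compatibility of Chow--K\"unneth with the Abel--Jacobi map, using that $H^3(X,\QQ)$ is algebraic since $h^{3,0}(X)=0$), and $(\Pi^X_j)_\ast$ for $j\ge 5$ vanishes on $A^2$ for the usual support/dimension reasons after transposing. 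So the only potentially nonzero contribution on $A^2_{AJ}$ comes from $(\Pi^X_{4})_\ast$ through $(\Pi^X_{2n-3})_\ast$, and a finite--dimensionality refinement of the support argument (exactly as in lemma \ref{three}, pushing everything up to $(A^\prime)^{(n)}$ or $(A^\prime)^n$ via proposition \ref{equiv} and using $p_g(A^\prime)=0$) collapses all of these except $\Pi^X_{2n-2}$.

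Next I would invoke the equivariant comparison of proposition \ref{equiv}, suitably adapted to codimension--two cycles rather than zero--cycles, to reduce the remaining statement to a question on $(A^\prime)^{(n)}$, hence on $(A^\prime)^n$, hence ultimately on $A^\prime$. Since $A^\prime=A/\langle\phi_0\rangle$ with $A$ an abelian surface and $p_g(A^\prime)=0$, the motive of $A^\prime$ has vanishing transcendental part $t_2(A^\prime)=0$ (in the language of \cite{KMP}), so $A^2_{AJ}(A^\prime)_{\QQ}=A^2_{hom}(A^\prime)_{\QQ}=0$; and for products and symmetric products of $A^\prime$ one combines this with the truth of the Hodge conjecture for (quotients of) self--products of abelian surfaces \cite{Ab2} plus finite--dimensionality, exactly as in the final part of the proof of lemma \ref{three}. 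Putting this together yields $(\Pi^X_{2n-2})_\ast A^2(X)_{\QQ}=0$ as well, hence $A^2_{AJ}(X)_{\QQ}=0$.

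Alternatively, and perhaps more cleanly, one can argue directly: by \cite{BS} the vanishing $A_0(X)_{\QQ}=\QQ$ already implies that $H^j(X,\QQ)$ is supported on a divisor for $j>0$ (this is corollary \ref{BS}), and in particular there is a ``niveau'' filtration forcing $A^2_{hom}(X)_{\QQ}=A^2_{AJ}(X)_{\QQ}$; then the Bloch--Srinivas decomposition of the diagonal coming from $A_0(X)_{\QQ}=\QQ$ shows a multiple of $\Delta_X$ is supported on $D\times X+X\times C$ for a divisor $D$ and a curve $C$, whence $A^2_{hom}(X)_{\QQ}$ is a quotient of $A^1_{hom}(\wt{D})_{\QQ}$ for a desingularisation $\wt{D}\to D$, which is divisible and representable, and the Abel--Jacobi map on it is injective by Rojtman's theorem together with the fact that $\wt D$ is a surface; one concludes $A^2_{AJ}(X)_{\QQ}=0$. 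The main obstacle in either approach is the careful bookkeeping for the ``middle'' Chow--K\"unneth projectors on $A^2$ --- ensuring that the support/finite--dimensionality argument of lemma \ref{three}, which was stated for $0$--cycles, really does go through verbatim for codimension--two cycles once one keeps track of Hodge types --- but this is routine given proposition \ref{equiv} and the properties of the motive of $A^\prime$.
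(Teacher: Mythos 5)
Your second, ``alternative'' argument is essentially the paper's proof: the paper disposes of this corollary in one line by citing Bloch--Srinivas \cite{BS}, i.e.\ the fact that $A_0(X)_{\QQ}=\QQ$ forces $A^2_{AJ}(X)_{\QQ}=0$ via the decomposition of the diagonal. Your first approach, through Chow--K\"unneth projectors and proposition \ref{equiv}, is a genuinely different and much heavier route; it could probably be made to work, but it is unnecessary here and you yourself flag the bookkeeping for the middle projectors as nontrivial, so I would drop it in favour of the direct argument. In the direct argument a few details should be corrected: since $A_0(X)_{\QQ}=\QQ$ (supported on a point, not a curve), the Bloch--Srinivas decomposition reads $N\Delta_X=X\times x+\Gamma$ with $\Gamma$ supported on $D\times X$ for a divisor $D\subset X$, so the correspondence $X\times x$ kills $A^2_{hom}$ outright and $\Gamma$ makes $A^2_{hom}(X)_{\QQ}$ a quotient of $A^1_{hom}(\wt{D})_{\QQ}$; the injectivity of the Abel--Jacobi map on $A^1_{hom}(\wt{D})$ is the classical identification $\pic^0(\wt{D})\cong J^1(\wt{D})$ and has nothing to do with Rojtman's theorem (which concerns torsion zero--cycles); and $\wt{D}$ is a desingularised divisor in $X$, hence of dimension $2n-3$, not a surface. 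With those repairs the short argument is exactly what the paper intends.
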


\begin{proof} Again, this is true for any variety with trivial Chow group of zero--cycles \cite{BS}.
\end{proof}

\vskip1cm
\begin{nonumberingt} The ideas developed in this note came to fruition after the Strasbourg 2014---2015 groupe de travail based on the monograph \cite{Vo}. Thanks to all the participants of this groupe de travail for the stimulating atmosphere. Thanks to the referee for insightful comments.
Many thanks to Yasuyo, Kai and Len for lots of pleasant lunch breaks.
\end{nonumberingt}

\vskip1cm

\end{document}